\newtheorem{thm}{Theorem}[section]
\newtheorem{lem}[thm]{Lemma}
\theoremstyle{definition}
\newtheorem{exa}[thm]{Example}
\newcommand{\Z}{\mathbb{Z}}
\newcommand{\Q}{\mathbb{Q}}
\newcommand{\C}{\mathbb{C}}
\newcommand{\B}{\mathbf{B}}
\DeclareMathOperator{\Aut}{Aut}
\begin{document}	
\title[Zeta functions for table algebras and fusion rings]{Zeta functions for table algebras and fusion rings with irrational-valued characters}
\author[ ]{Angelica Babei${^*}$ and Allen Herman$^{\dagger}$}
\address{Department of Mathematics $\&$ Statistics, McMaster University, Hamilton Hall, 1280 Main Street West, Hamilton, ON,
L8S 4K1, Canada }\email{babeiangelica@gmail.com}
\address{Department of Mathematics and Statistics, University of Regina,
  Regina, Saskatchewan S4S 0A2, Canada}\email{allen.herman@uregina.ca}

   \thanks{$^{*}$ Department of Mathematics $\&$ Statistics, McMaster University, Hamilton Hall, 1280 Main Street West, Hamilton, ON,
L8S 4K1, Canada}
 \thanks{$^{\dagger}$  Department of Mathematics and Statistics, University of Regina,
  Regina, Saskatchewan S4S 0A2, Canada} 
\thanks{$^{\dagger}$ The work of the second author is supported by an NSERC Discovery Grant.} 
\keywords{Zeta functions, orders, association schemes, integral adjacency algebras, table algebras, fusion rings}
\subjclass{Primary: 11S45. Secondary: 11R54, 05E30.}

\begin{abstract}	
We calculate ideal zeta functions for certain orders of rank $3$ defined by standard integral table algebras and integral fusion rings that have irrational-valued irreducible characters.  The calculations are obtained from explicit calculations of zeta integrals.   
\end{abstract}

\maketitle

\section{Introduction}

\medskip
Let $\Lambda$ be an order in a semisimple $\Q$-algebra $A$, and let $M$ be a $\Lambda$-lattice.  In \cite{Solomon77}, Solomon defined a zeta function $$ \zeta_{\Lambda}(M;s) =  \sum_{n \ge 1} a_n n^{-s} $$ 
where $a_n$ is the number of $\Lambda$-sublattices of $M$ of index $n$ for all $n \in \Z^+$.  In the case $M=\Lambda$, $\zeta_{\Lambda}(s) := \zeta_{\Lambda}(\Lambda;s)$ is called the {\it zeta function of $\Lambda$}, and is the generating function for the sequence $\{ a_n \}$ that counts the number of ideals of index $n$.

In \cite{Solomon77}, Solomon gave a calculation of $\zeta_{\Lambda}(s)$ in the case where $\Lambda = \Z G$ is an integral group ring of a finite cyclic group of prime order $p$.  Work of Hironaka (\cite{Hironaka81}, \cite{Hironaka85}) and Takegahara (\cite{Takegahara87}) in the 1980s produced the calculations of $\zeta_{\Z G}(s)$ where $G$ is any abelian of order $pq$, where $p$ and $q$ are not-necessarily distinct primes.  The same elementary approach was used by Hanaki and Hirasaka for integral adjacency rings of association schemes that have prime order or rank $2$ \cite{Hanaki-Hirasaka2016}, and by Hirasaka and Oh for quotient polynomial rings of the form $\Z[x]/(x-k)(x-a)(x-b)$ for $k,a,b \in \Z$ \cite{Hirasaka-Oh2018}.  In \cite{BH2022}, the authors applied 
a formula involving local zeta integrals due to Bushnell and Reiner in \cite{Bushnell-Reiner1980} to give explicit calculations of $\zeta_{\Z B}(s)$, where $\Z B$ was the integral adjacency ring of certain small association schemes with rational character tables. 

In this paper, we improve the methods of \cite{BH2022} to include cases where $\C \B$ has an irrational character table with respect to the $\Z$-basis $\B$.  This allows us to compute zeta functions of $\Lambda=\Z \B$ in more situations.  We illustrate these techniques by calculating  $p$-local zeta functions for the integral adjacency rings of association schemes of rank $3$ that correspond to doubly regular tournaments or conference graphs whose order $n$ has $p$-valuation $1$ or $3$ at an odd prime $p$, and the global zeta functions of all categorifiable integral fusion rings of rank $3$.

\section{Orders defined by table algebras and fusion rings}

An  {\it integral table algebra} $(A,\B)$ is a finite-dimensional algebra $A$ with (skew-linear) involution $*$ over $\C$ whose defining basis $\B = \{1=b_0, b_1, \dots, b_d\}$ contains $1$, is $*$-closed, admits non-negative integer structure constants $\{ \lambda_{ijk} : 0 \le i,j,k \le d \}$ satisfying 
$$b_i b_j = \sum_{k=0}^d \lambda_{ijk} b_k,$$ 
and satisfies the {\it pseudo-inverse} condition: for all $b_i \in B$, $b_i^*$  is the unique $b_j \in \B$ such that $\lambda_{ij0} > 0$, and furthermore if we denote the index of this $b_j$ by $i^*$, then $\lambda_{ii^*0}=\lambda_{i^*i0}$. 

Examples of integral table algebras include the group algebras of finite groups $G$ (whose skew-linear involution is given by $g^*=g^{-1}$ for all $g \in G$), the adjacency algebras (a.k.a.~Bose-Mesner algebras) of association schemes (for which $\B$ can be taken to be the standard basis of adjacency of adjacency matrices of the association scheme and the involution is the conjugate transpose), and the complexification of an integral fusion ring (which is precisely a commutative integral table algebra with the additional property $\lambda_{ii^*0}=\lambda_{i^*i0}=1$.)  When the algebra is commutative, the fact that the basis elements $b_i \in \B$ are represented by non-negative integer matrices in the regular representation implies that the largest eigenvalue $k_i > 0$ of each matrix will be obtained from a common Perron-Frobenius eigenvector, and thus the map $\delta(b_i)=k_i$ extends to a linear character of $A$.  This unique irreducible character that takes positive values on $\B$ is called the degree map of the table algebra, it corresponds to the augmentation map for group rings, the valency map for association schemes, and the Perron-Frobenius eigenvalue for fusion rings.  If we re-scale the basis so that $\lambda_{ii^*0}=k_i$ for all $i$, we call the new basis $\B$ {\it standard}, and if we re-scale the basis so that $\lambda_{ii^*0}=1$ for all $i$, we call the new basis $\B$ {\it transitional}. (This terminology comes from Blau's survey, see \cite{Blau09}.)  In our group algebra and adjacency algebra situations, the defining basis is standard, and for fusion rings the defining basis is transitional.  Note that re-scaling can affect the integrality of structure constants, which is necessary for $\Z \B$ to be an order in $\Q \B$.     

When $\B$ is the standard basis of a table algebra, $n = \sum_{i=0}^d \delta(b_i)$ is called the order of the table algebra, and the linear map extending $\rho(b_i) = \begin{cases} n & i=0 \\ 0 & i \ne 0 \end{cases}$ is called the standard feasible trace of $\C \B$.  The standard feasible trace induces a bilinear form $\langle a,b \rangle = \rho(ab^*)$, $a,b \in \C \B$, which leads to a formula for the centrally primitive idempotents of the table algebra (see \cite[\S5]{Higman1975}, \cite[\S 5]{Higman1987}, \cite[Proposition (9.17)]{CR1}, and \cite[Theorem 3.6]{AFM1999}).  For the standard basis of a table algebra, this formula says the centrally primitive idempotent of $\C \B$ associated to an irreducible character $\chi$ of $\C \B$ is  
\begin{equation}\label{standard-character-formulae}
e_{\chi} = \frac{m_{\chi}}{n} \sum_{i=0}^d \frac{\chi(b_i^*)}{\delta(b_i)} b_i 
\end{equation}
where the numbers $m_{\chi}$ are the positive real numbers that occur in the expression of the standard feasible trace as a linear combination of the irreducible characters of $\C \B$; i.e. $\rho = \sum_{\chi \in Irr(\C\B)} m_{\chi} \chi$.  The positive real number $m_{\chi}$ is called the {\it multiplicity} of $\chi$, and it follows from uniqueness of the Perron-Frobenius eigenvector that the degree character $\delta$ has multiplicity $m_{\delta}=1$.  
 When $\B$ is a transitional basis, the re-scalings $b_i \mapsto \delta(b_i)b_i:= B_i$ and $B_i \mapsto \frac{1}{\sqrt{\delta(B_i)}}B_i:=b_i$ allow us to change back-and-forth from a standard basis, so in the case when $\Z \B$ is a fusion ring the above formula translates to 
\begin{equation}\label{transitional-character-formulae}
e_{\chi} = \frac{m_{\chi}}{n} \sum_{i=0}^d \chi(b_i^*)b_i.
\end{equation}

\section{Zeta functions of Orders}

Let $\Lambda$ be an order in a commutative semisimple $\Q$-algebra $A$.  In this case $A$ has a unique maximal order 
$$ \Lambda_0 = \oplus_{\chi} \Z(\chi) \tilde{e}_{\chi}, $$ 
where $\chi$ runs over a set of representatives of the Galois conjugacy classes of irreducible characters of $\C \B$, and the primitive idempotents $\tilde{e}_{\chi}$ of $\Q \B$ are the sums of the primitive idempotents $e_{\psi}$ of $\C \B$ where $\psi$ runs over the Galois congujates of $\chi$, and $\Z(\chi)$ are the rings of integers of the number fields $\Q(\chi)$. 
The Solomon zeta function of $\Lambda_0$ will be the product of the Dedekind zeta functions for the number fields $\Q(\chi)$.  Recall that when $R$ is the ring of integers of a number field, its Dedekind zeta function is 
$$\zeta_R(s) = \prod_{\mathcal{P}} (1 - [R:\mathcal{P}]^{-s})^{-1}, $$ 
where $\mathcal{P}$ runs over the nonzero prime ideals of $R$. 

In most of the cases  we 
consider in this paper, our integral table algebra $\Z \B$ 
is not equal to the maximal order of $\Q \B$. In \cite{Solomon77}, Solomon proved the Euler product identity $\zeta_{\Z \B}(s) = \prod_p \zeta_{\Z_p \B}(s)$, where $p$ runs over the rational primes, and $\Z_p \B$ denotes the $p$-adic completion $\Z_p \otimes \Z \B$.  Furthermore, he showed each factor $\zeta_{\Z_p\B}(s)$ of this Euler product is equal to $\zeta_{\Lambda_{0,p}}(s)$ times the value of a polynomial $\delta_{p}(x)$ evaluated at $p^{-s}$, where $\Lambda_{0,p}$ is a maximal order of $\Q_p\B$ containing $\Z_p\B$.   In all but finitely many cases, Solomon showed $\Z_p\B$ is equal to $\Lambda_{0,p}$, the exceptions being the primes $p$ that divide either the discriminant of $\Z \B$ or the least positive integer $f$ with $f \Lambda_0 \subset \Z\B$.  

This reduces the calculation of $\zeta_{\Z \B}(s)$ to the calculation of the zeta functions of the local orders $\Z_p \B$ for finitely many primes $p$.  To calculate these, we will use the approach Bushnell and Reiner introduced in \cite{Bushnell-Reiner1980} for calculating local zeta integrals.  First, Bushnell and Reiner showed 
that when $\Lambda_p$ is a $\Z_p$-order in a maximal order $\Lambda_{0,p}$ 
of a $\Q_p$-algebra $A_p$, 
$$\zeta_{\Lambda_p}(s) =  \sum_{M \in \mathscr{G}} Z_{\Lambda_p}(\Lambda_p, M ; s), $$ 
where $\mathscr{G}$ is a set of representatives up to isomorphism for all full $\Lambda_p$-lattices in $A_p$.  For each $M \in \mathscr{G}$ the \textit{genus zeta function} $Z_{\Lambda_p}(\Lambda_p,M;s)$ computes the contribution from all $\Lambda_p$-sublattices isomorphic to $M$. In the special case of the regular module of $\Lambda_p$, Bushnell and Reiner's integral formula for the gunus zeta function reduces to 
\begin{equation}
\label{genuszetaeqn}
 Z_{\Lambda_p}(\Lambda_p, M; s)=\mu(\Aut M)^{-1} (\Lambda_p:M)^{-s} \int_{A_p^{\times}}  \Phi_{\{M:\Lambda_p\}}(x) \left\Vert x \right\Vert^s d^\times x,
\end{equation}
where   the  
index is $\displaystyle (\Lambda_p:M)=\frac{(\Lambda_p:(\Lambda_p \cap N))}{(M:(\Lambda_p \cap M))}$,  $ \Phi_{\{M:\Lambda_p \}}$ is the characteristic function in $A$ of the lattice 
\[\{M:\Lambda_p \}=\{ x \in A \, | \, Mx \subseteq \Lambda_p \},\]
the norm is $\left\Vert x \right\Vert=(Nx:N)$ where $N$ is any full $\Z_p$-lattice in $A_p$, and $d^\times x$ denotes a multiplicative Haar measure on $A_p^\times$, which is normalized so that integrating over $\Lambda_{0,p}^{\times}$ gives $1$.   The first factor in their integral formula is the inverse of $\mu(\Aut M)=\mu(\{ M: M\}^\times)$ in this same measure.  When the $\Lambda_p$-lattice $M$ is a $\Z_p$-order, $\mu(\Aut M)=\mu (M^\times)$.  

The first challenge in applying Bushnell and Reiner's integral formula is to determine a full set of representatives for the full $\Z_p\B$-lattices in $A_p$.   In \cite{BH2022}, all of the cases under consideration were orders in split semisimple $\Q_p$-algebras, which made this task a bit easier.  The next Lemma generalizes \cite[Lemma 3.2 and 3.4]{BH2022}, it will enable us to do this in the non-split situation.   

\begin{lem} Let $A$ be a commutative semisimple $\Q_p$-algebra.  Let $\mathscr{E} = \{ e_0, \dots, e_d \}$ be the set of primitive idempotents of $A$, and suppose $A \simeq K_0e_0 \oplus \dots \oplus K_d e_d$ for $p$-adic number fields $K_0, \dots, K_d$.  For each $i=0,\dots,d$ let $R_i$ be the ring of integers of $K_i$ and let $\pi_i R_i$ be the maximal ideal of $R_i$.  

Let $\Lambda$ be a full $\Z_p$-order in the maximal order $\Lambda_0 = R_0 e_0 \oplus \dots \oplus R_d e_d$ of $A$.   

Let $N$ be any $\Lambda$-lattice in $A$. Then the following hold. 

(i) $N$ has a block upper triangular basis as a $\Z_p$-module; i.e. there exists a $\Z_p$-basis 
$$ \{ v_{0,1}, v_{0,2}, \dots, v_{0,r_0}, v_{1,1}, \dots, v_{d,r_d} \} $$
of $\Lambda_0$ and powers $a_{i,j_i} \ge 0$ for $i=0,\dots,d$, $j_i=1,\dots,r_i$ such that 
$\{v_{i,1}e_i, \dots, v_{i,r_i}e_i \}$ is a $\Z_p$-basis of $R_i e_i$ for $i=0,\dots,d$; $\{ \pi_i^{a_{i,j_i}} v_{i,j_i} : i=0,\dots,d, j_i=1,\dots,r_i \}$ is a $\Z_p$-basis of $N$; and $v_{i,j_i}e_k = 0$ if $k < i$, for  $j_i = 1,\dots,r_i$, $i=1,\dots,d$, and $k=0,\dots,d-1$.

(ii) There exists a $\Lambda$-lattice $M$ in $A$ such that $M \simeq N$ as $\Lambda$-lattices, $1 \in M$, and $M \subseteq \Lambda_0$.  

(iii) Suppose $M$ and $N$ are an isomorphic pair of $\Lambda$-lattices in $\Lambda_0$ that both contain $1$.  Then any $\Lambda$-lattice isomorphism from $M$ to $N$ is realized as multiplication by a unit $u \in N \cap \Lambda_0^{\times}$ for which $u^{-1} \in M \cap \Lambda_0^{\times}$. 
\end{lem}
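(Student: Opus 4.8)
The plan is to prove (i) first, since it produces the normal form that drives the rest, then dispatch (iii) by a short direct computation, and finally deduce (ii). For (i) I would filter $A$ by the idempotents: set $A_i=(e_i+e_{i+1}+\cdots+e_d)A=\bigoplus_{k\ge i}K_ke_k$, giving a chain $A=A_0\supseteq A_1\supseteq\cdots\supseteq A_d\supseteq A_{d+1}=0$, and put $N_i=N\cap A_i$, a full $\Z_p$-lattice in $A_i$. Multiplication by $e_i$ is the projection $A_i\to K_ie_i$, and its kernel on $N_i$ is exactly $N_{i+1}$, so $e_i$ induces an isomorphism $N_i/N_{i+1}\xrightarrow{\sim}e_iN_i=:L_i$ onto a full $\Z_p$-lattice in $K_ie_i$. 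The hypothesis that the exponents are non-negative amounts to $N\subseteq\Lambda_0$, which I assume; then $L_i\subseteq R_ie_i$. The heart of the argument is a rank-one normal form (stated below): $L_i$ has a $\Z_p$-basis $\pi_i^{a_{i,1}}u_{i,1},\dots,\pi_i^{a_{i,r_i}}u_{i,r_i}$ with $\{u_{i,j}\}_j$ a $\Z_p$-basis of $R_ie_i$ and all $a_{i,j}\ge 0$. Granting this, I would lift each $\pi_i^{a_{i,j}}u_{i,j}$ to $n_{i,j}\in N_i\subseteq\Lambda_0$ with $e_in_{i,j}=\pi_i^{a_{i,j}}u_{i,j}$, and set $v_{i,j}=u_{i,j}+\sum_{k>i}e_kn_{i,j}$. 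Then $v_{i,j}\in\Lambda_0$, $e_iv_{i,j}=u_{i,j}$, $v_{i,j}e_k=0$ for $k<i$, and scaling the $i$-th block of $v_{i,j}$ by $\pi_i^{a_{i,j}}$ returns $n_{i,j}$. Since the $n_{i,j}$ restrict to bases of the successive quotients $N_i/N_{i+1}$ they form a $\Z_p$-basis of $N$; and since the transition matrix from the block-diagonal basis $\{u_{i,j}\}$ to $\{v_{i,j}\}$ is block upper triangular with identity diagonal blocks, the $\{v_{i,j}\}$ form a $\Z_p$-basis of $\Lambda_0$.

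The rank-one normal form is where the work lies, and it is precisely the ingredient beyond the split case of \cite{BH2022} (there every $K_i=\Q_p$, $r_i=1$, and the statement is vacuous). In the unramified case $\pi_i$ and $p$ generate the same ideal, so it is just the elementary divisor theorem for the pair $(L_i,R_i)$ over $\Z_p$. The ramified case is the main obstacle: $L_i$ need not be an $R_i$-module, so multiplication by $\pi_i$ does not preserve $L_i$ and one cannot simply substitute $\pi_i$ for $p$ in elementary divisors. I would prove it by induction on the index $[R_i:L]$ after normalizing $L\subseteq R_i$: if $L\subseteq\pi_iR_i$ then $\pi_i^{-1}L\subseteq R_i$ has strictly smaller index and one divides through by $\pi_i$; otherwise $L$ meets $R_i^{\times}$, and one peels off a unit vector spanning the image of $L$ in the residue field $R_i/\pi_iR_i$, arranging the remaining generators to be $\pi_i$-multiples of a completion of it to a $\Z_p$-basis of $R_i$. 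Verifying that the lifts chosen at the successive stages assemble into a single $\Z_p$-basis of $R_i$, rather than merely generating $L$, is the delicate bookkeeping step.

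Part (iii) I would dispatch directly. Because $N$ spans $A$ over $\Q_p$ and $\Lambda\otimes_{\Z_p}\Q_p=A$, any $\Lambda$-linear map $\phi\colon M\to N$ extends to a $\Q_p$-linear, hence $A$-linear, endomorphism of $A$, which is multiplication by $u:=\phi(1)$ (using $1\in M$). As $\phi$ is an isomorphism, $u\in A^{\times}$ and $N=uM$; moreover $u=\phi(1)\in N\subseteq\Lambda_0$, while the same argument applied to $\phi^{-1}$ gives $u^{-1}=\phi^{-1}(1)\in M\subseteq\Lambda_0$ (using $1\in N$). Hence $u\in\Lambda_0^{\times}$ with $u\in N$ and $u^{-1}\in M$, as required.

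Finally, for (ii) I would realize the desired representative as $M=z^{-1}N$ for a single unit $z\in A^{\times}$. Since isomorphisms are multiplications by units of $A$, the two requirements $1\in M$ and $M\subseteq\Lambda_0$ translate to $z\in N$ and $z^{-1}N\subseteq\Lambda_0$; writing $m_i=\min\{\,v_i(e_in):n\in N\,\}$ for the least $i$-th block valuation attained on $N$, these conditions force $z$ to be a unit of $A$ realizing $m_i$ in every block simultaneously. The crux of (ii) is therefore the existence of such a simultaneously minimal unit vector in $N$, and this is the step I expect to require the most care; I would extract it from the block upper triangular basis of (i), using fullness of $N$ to see that each block minimum is attained by a leading (diagonal) basis vector and that these can be combined into one element that is a unit in every coordinate. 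Once $z$ is produced, $M=z^{-1}N$ satisfies $1\in M\subseteq\Lambda_0$ and $M\simeq N$, completing the lemma.
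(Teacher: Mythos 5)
Your plan follows the same lines as the paper's proof: part (i) by working block-by-block through the idempotents to build a triangular basis, part (iii) by extending an isomorphism to an $A$-linear map and reading off multiplication by $u=\phi(1)$, and part (ii) by rescaling $N$ by a unit of $A$. Part (iii) is complete and correct, and matches the paper. But the two steps you explicitly defer are not bookkeeping; they are the entire mathematical content of the lemma, and as sketched neither of them closes.

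For (i), everything does reduce to your rank-one normal form, but your induction does not prove it. When the image of $L$ in $R_i/\pi_i R_i$ has $\mathbb{F}_p$-dimension larger than one there is no single ``unit vector spanning the image''; worse, after peeling off unit vectors what remains is a non-full sublattice of $R_i$, so the induction hypothesis (a statement about full lattices in $R_i$) does not apply to it, and the assertion that vectors produced at different stages assemble into one $\Z_p$-basis of $R_i$ is exactly what you concede is missing. You have also located the difficulty in the wrong place: in the cases this paper needs, the uniformizer satisfies $\pi_i^2=pv$ with $v\in\Z_p^{\times}$, so the elementary divisor theorem already finishes the argument --- write $L=\bigoplus_j \Z_p p^{b_j}x_j$ with $\{x_j\}$ a $\Z_p$-basis of $R_i$, and note $\Z_p p^{b_j}x_j=\Z_p\pi_i^{2b_j}x_j$ because $\pi_i^{2b_j}=p^{b_j}v^{b_j}$ differs from $p^{b_j}$ by a unit of $\Z_p$. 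So the ramified quadratic case is exactly as easy as the split case. The genuinely hard situation is a uniformizer with $\pi_i^{e}\notin p\Z_p^{\times}$ (for instance, no uniformizer of $\Z_2[\sqrt{3}]$ has its square in $\Q_2$ at all), and there your sketch supplies no proof. For what it is worth, the paper's own proof buries the same point under ``it is easy to see'': its recipe of dividing each chosen basis vector of $Ne_i$ by its $\pi_i$-power is false as stated --- for the basis $\{1,\pi^3\}$ of $L=\Z_p+\Z_p\pi^3\subseteq\Z_p[\pi]$, $\pi^2=p$, the extracted units are $1$ and $1$. So you found the real crux; you did not resolve it.

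For (ii), your reduction --- find $z\in N$ attaining the minimal valuation $m_i$ in every block simultaneously --- is correct and is the same reduction the paper makes with its maps $\phi_i$. But your construction of $z$ fails. First, the block minima need not be attained by diagonal vectors of the triangular basis: in $N=\Z_p(1,1)+\Z_p(0,p)\subseteq\Q_p^2$ the block-$1$ minimum is attained by $(1,1)$, not by the diagonal vector $(0,p)$. Second, and more seriously, no combination argument can work at the stated level of generality, because the required $z$ need not exist: take $p=2$, $A=\Q_2^3$, and $N=\{x\in\Z_2^3 : x_1+x_2+x_3\in 2\Z_2\}$, a lattice over the full order $\Lambda=\Z_2(1,1,1)+2\Z_2^3$. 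All block minima are $0$, yet every element of $N$ has an even coordinate. Since (by the argument of (iii)) any $\Lambda$-isomorphism is multiplication by some $u\in A^{\times}$, having $1\in uN\subseteq\Z_2^3$ would force $y=u^{-1}$ to lie in $N$ while $(1,1,0),(0,1,1)\in N\subseteq y\Z_2^3$ forces all coordinates of $y$ to be units --- impossible. So part (ii) is actually false here; the lemma needs, and implicitly uses, the hypothesis that $A$ has at most $p$ simple components. The missing ingredient that proves (ii) under that hypothesis (and repairs the paper's proof, which also asserts the existence of $z$ without justification) is a covering argument: once the minima are normalized to zero, each $B_i=\{n\in N : ne_i\in\pi_iR_ie_i\}$ is a proper open subgroup of $N$, and a finite abelian $p$-group cannot be covered by $p$ or fewer proper subgroups; hence some $z\in N$ avoids all $B_i$. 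This hypothesis holds in every application in the paper, where the completed algebras have exactly two simple components.
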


\begin{proof} (i).   Let $U$ be a $\Z_p$-basis of $N$.  We choose a
subset $W_0 = \{w_{0,1}, \dots, w_{0,r_0}\}$ of $U$ so that $\{w_{0,1}e_0, \dots, w_{0,r_0}e_0 \}$ is a $\Z_p$-basis of $N e_0$.  Since $R_0 e_0$ is a local ring with maximal ideal generated by $\pi_0$, we can find powers $\pi_0^{a_{0,j_0}}$ with $a_{0,j_0} \in \Z$ for $j_0=1,\dots,r_0$ such that $\pi_0^{-a_{0,j_0}}w_{0,j_0}e_0$ is a unit of $R_0 e_0$ for $j_0 = 1,\dots,r_0$.  Define $v_{0,j_0} \in \Lambda_0$ to be $\pi_0^{-a_{0,j_0}}w_{0,j_0}e_0 + w_{0,j_0}(1-e_0)$ for $j_0 = 1, \dots, r_0$.  It is easy to see that $V_0=\{v_{0,j_0}e_0 : j_0 = 1,\dots,r_0 \}$ is a $\Z_p$-basis of $R_0e_0$, and $\{\pi_0^{a_{0,j_0}}v_{0,j_0}e_0 : j_0=1,\dots,r_0 \} = \{ w_{0,j_0}e_0 : j_0=1,\dots,r_0 \}$ is a $\Z_p$-basis of $Ne_0$.  

For $u \in U - W_0$, we have that $ue_0 = w_ue_0$, where $w_u$ is a specific $\Z_p$-linear combination of $\{w_{0,j_0}: j_0=1,\dots,r_0 \}$.  Let $U_1 = \{ u - w_u : u \in U - W_0 \}$.  Then $W_0 \cup U_1$ is a $\Z_p$-basis of $N$ for which $U_1e_0 = 0$.  Repeat this process with the idempotents $e_1, \dots, e_d$ in succession, to produce $\Z_p$-independent sets $W_0, W_1, \dots, W_d$ whose union is a block upper triangular basis of $N$ with respect to the idempotents $e_0, \dots, e_d$. 

(ii).  Let $N$ be a full $\Lambda$-lattice in $A$.  Since each $K_i$ is a $p$-local field, we can multiply by an appropriate power of $p$ to arrange that $N \subseteq \Lambda_0$ but $ p^{-1}N \not\subseteq  \Lambda_0$.   If $N$ contains a unit $u$ of $\Lambda_0$, then $Nu^{-1}$ would be a $\Lambda$-lattice satisfying $1 \in Nu^{-1} \subseteq \Lambda_0$.  So suppose $N$ does not contain a unit.  Then there is a primitive idempotent $e_i$ for which $Ne_i \subseteq \pi_i R_i e_i$.  Suppose $a_i \ge 1$ is the largest power of $\pi_i$ such that $Ne_i \subsetneq \pi_i^{a_i}R_i e_i$.  Let $\phi_i : N \rightarrow \Lambda_0$ be the map $\phi_i(n) = n (1 - e_i) + \pi_i^{-a_i}ne_i$ for all $n \in N$.  It is easy to see that $\phi_i$ is $\Lambda_0$-linear and injective, so $\phi_i(N)$ is isomorphic to $N$ as a $\Lambda$-lattice and the number of primitive idempotents $e_j$ for which $\phi_i(N)e_j \subseteq \pi_j R_j e_j$ is one fewer than the number for $N$.  By repeatedly applying the maps $\phi_i$ we will eventually arrive at a $\Lambda$-lattice $M$ that is isomorphic to $N$ and will contain an element $u$ for which $ue_i \in R_ie_i - \pi_i R_i e_i$ for $i=0,\dots,d$.  So this element $u$ of $M$ is a unit of $\Lambda_0$, and (ii) follows.

(iii).  Let $k$ be a positive integer for which $p^kM \subseteq \Lambda$, and let $\phi:M \rightarrow N$ be a $\Lambda$-lattice isomorphism.  Then for all $m \in M$
$$ \phi(m) = p^{-k} p^k \phi(m) = p^{-k} \phi(p^km) = p^{-k}(p^km) \phi(1) = m \phi(1). $$
Let $u = \phi(1)$.  We have that $u \in \Lambda_0$ because $\phi(M)=N \subseteq \Lambda_0$.  Since $1 \in N$, there is a $v \in M$ for which $1 = \phi(v) = vu$. Therefore, $u \in N \cap \Lambda_0^{\times}$ and $v = u^{-1} \in M \cap \Lambda_0^{\times}$. 
\end{proof} 

Once we have a set of representatives $\{M\}$ of the genera of $\Z_p\B$-lattices in $A$, the next challenge is determining the genus zeta functions $Z_{\Z_p\B}(\Z_p\B, M; s)$. The following lemma is helpful in determining these integrals.

\begin{lem}
\label{smallintegrals}
Let $K$ be a $p$-adic number field with ring of integers $R$, whose maximal ideal is $\pi R$, such that the residue class field  $R/\pi R$ is isomorphic to  the finite field with $q$ elements $\mathbb{F}_q$, where $q=p^f$.  Let $d^\times x$ denote a multiplicative Haar measure on $K^\times$ such that $\int_{R} d^\times x = 1$ and $\left\Vert x \right\Vert=(R:Rx)^{-1}$.  Then the following hold:
\begin{enumerate}
\item For any measurable subset $S \subseteq R$ and $i \ge 0$, $\int_{\pi^i S} \left\Vert x \right\Vert^s d^\times x = q^{-is} \int_{S} \left\Vert x \right\Vert^s d^\times x$;
\item $\int_{\pi^i R-\{0\}} \left\Vert x \right\Vert^s d^\times x = \frac{q^{-is}}{1-q^{-s}}$;
\item For $i\ge 1$, denote the $i$-th higher unit group by $U^{(i)} = 1 +\pi^i R$. Then $\int_{U^{(i)}} \left\Vert x \right\Vert^s d^\times x = \frac{1}{q^{i-1}(q-1)}$. Moreover, for any $u \in (R/\pi R)^\times$, $a_k \in R/\pi R $ and $i \ge 1$, consider the subset $S=u+\sum_{k=1}^{i} a_k \pi^k + \pi^i R$. Then $\int_{S} \left\Vert x \right\Vert^s d^\times x = \frac{1}{q^{i-1}(q-1)}$.
\end{enumerate}
\end{lem}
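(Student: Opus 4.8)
The proof will rest on three elementary facts: the translation invariance of the multiplicative Haar measure, the multiplicativity of the norm together with $\left\Vert \pi \right\Vert = q^{-1}$, and the normalization, which I read as $\int_{R^\times} d^\times x = 1$. (A multiplicative Haar measure assigns infinite total mass to $R\setminus\{0\} = \bigsqcup_{j\ge 0}\pi^j R^\times$, so the stated normalization must refer to the unit group $R^\times$; this is consistent with the Bushnell--Reiner convention recalled above and, as a check, with part (2).) For (1), I would make the multiplicative substitution $y = \pi^i x$. Invariance gives $d^\times y = d^\times x$, while $\left\Vert \pi^i x \right\Vert^s = q^{-is}\left\Vert x \right\Vert^s$; since $y$ ranges over $\pi^i S$ as $x$ ranges over $S$, the change of variables turns $\int_{\pi^i S}\left\Vert y \right\Vert^s d^\times y$ into $q^{-is}\int_S \left\Vert x \right\Vert^s d^\times x$.

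For (2), I would first apply (1) with $S = R\setminus\{0\}$ to reduce to the case $i=0$, using $\pi^i(R\setminus\{0\}) = \pi^i R\setminus\{0\}$. Then I would decompose $R\setminus\{0\} = \bigsqcup_{j\ge 0}\pi^j R^\times$, observe that $\left\Vert x \right\Vert^s \equiv q^{-js}$ on the piece $\pi^j R^\times$ and that each such piece has measure $\int_{\pi^j R^\times}d^\times x = \int_{R^\times}d^\times x = 1$ by invariance, and sum the geometric series $\sum_{j\ge 0}q^{-js} = (1-q^{-s})^{-1}$ (convergent for $\mathrm{Re}(s)>0$). Multiplying back the factor $q^{-is}$ yields the stated formula.

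For (3), the key observation is that $U^{(i)} \subseteq R^\times$, so $\left\Vert x \right\Vert = 1$ on $U^{(i)}$ and the integral collapses to the measure $\int_{U^{(i)}}d^\times x$. I would evaluate this from the filtration $R^\times = U^{(0)} \supseteq U^{(1)} \supseteq U^{(2)} \supseteq \cdots$: the quotient isomorphisms $R^\times/U^{(1)} \cong \mathbb{F}_q^\times$ and $U^{(i)}/U^{(i+1)} \cong \mathbb{F}_q$ (the latter for $i\ge 1$) give indices $q-1$ and $q$ respectively, so invariance of the measure across cosets together with $\int_{R^\times}d^\times x = 1$ forces $\int_{U^{(i)}}d^\times x = 1/(q^{i-1}(q-1))$. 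For the ``moreover'' statement I would lift $u$ to a unit of $R$ and each $a_k$ to a representative, set $c = u + \sum_{k=1}^{i}a_k\pi^k$, and note that $c$ is a unit (as $c\equiv u\not\equiv 0 \bmod \pi$), whence $S = c + \pi^i R = c(1+\pi^i R) = c\,U^{(i)}$. Multiplicative invariance then identifies $\int_S d^\times x$ with $\int_{U^{(i)}}d^\times x$, giving the same value without further computation.

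I expect the only point requiring care to be the bookkeeping in (3): fixing the normalization as $\int_{R^\times}d^\times x = 1$, correctly identifying the orders $q-1$ and $q$ of the successive quotients of the unit filtration, and recognizing the additive coset $S$ as a multiplicative translate $c\,U^{(i)}$ so that no new integral must be evaluated. Parts (1) and (2) are then immediate consequences of invariance and a single geometric series.
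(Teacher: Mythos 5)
Your proof is correct and takes essentially the same approach as the paper's: part (1) via multiplicative invariance together with $\left\Vert \pi^i x\right\Vert = q^{-i}\left\Vert x \right\Vert$ (the paper phrases this through the shell decomposition $S = \dot\bigcup_j \pi^j S_j$ with $S_j \subseteq R^\times$, but the mechanism is identical), part (2) via $R-\{0\}=\dot\bigcup_{j\ge 0}\pi^j R^\times$ and a geometric series, and part (3) by reducing to $\mu(U^{(i)})$ --- which the paper simply cites to Neukirch, and the cited proposition is exactly the filtration isomorphisms $R^\times/U^{(1)}\cong\mathbb{F}_q^\times$ and $U^{(k)}/U^{(k+1)}\cong\mathbb{F}_q$ that you use --- followed by recognizing $S=c\,U^{(i)}$ for the unit $c=u+\sum_{k} a_k\pi^k$. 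Your reading of the normalization as $\int_{R^\times}d^\times x=1$ is also the right call: as you observe, the literal condition $\int_{R}d^\times x=1$ cannot hold for a multiplicative Haar measure, and the paper's own computation in (2), which assigns each shell $\pi^jR^\times$ measure $1$, confirms that this is the intended convention.
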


\begin{proof}
   \begin{enumerate}
    \item Write $S$ as a disjoint union $S= \dot\bigcup_{j} \pi^j S_j$, where each $S_j \subseteq R^\times$ and $j \ge 0$. Since $\pi^i(\pi^j S_j)$ and $\pi^jS_j$ are multiplicative translates, $\int_{\pi^i(\pi^j S_j)} d^\times x = \int_{\pi^jS_j}d^\times x$. Moreover, $( R : R \pi^j)=q^{j}$, so  
    \begin{align*} \int_{\pi^i S} \left\Vert x \right\Vert^s d^\times x& = \sum_j \int_{\pi^{i+j} S_j}   \left\Vert x \right\Vert^s d^\times x  = \sum_j \int_{\pi^{i+j} S_j} q^{-(i+j)s} d^\times x =   q^{-is} \sum_j\int_{\pi^i(\pi^{j} S_j)} q^{-js}d^\times x \\
    & = q^{-is} \sum_j\int_{\pi^{j} S_j} ( R : R \pi^j)^{-s} d^\times x = q^{-is}\int_S \left\Vert x \right\Vert^s d^\times x.
    \end{align*}
   
       \item The claim follows from (1), since we have the disjoint union $R-\{0\}=\dot\bigcup_{j=0}^\infty \pi^j R^\times$ and 
    $$\int_{R-\{0\}} \left\Vert x \right\Vert^s d^\times x =  \sum_{j=0}^\infty \int_{\pi^j R^\times} \left\Vert x \right\Vert^s  d^\times x =  \sum_{j=0}^\infty q^{-js}=\frac{1}{1-q^{-s}}.$$
   
    \item Since $(R:Rx) = 1$ for any $x \in U^{(1)}$, this follows from $\int_{U^{(i)}} \left\Vert x \right\Vert^s d^\times x = \mu(U^{(i)})=\frac{1}{q^{i-1}(q-1)}$ by \cite[Proposition 3.10]{neukirch}. The second claim is due to $S=(u+\sum_{k=1}^{i} a_k \pi^k)U^{(i)}$ with $u+\sum_{k=1}^{i} a_k \pi^k \in R^\times$.
   \end{enumerate} 
   \end{proof}

 We are now ready to investigate zeta functions for integral table algebras with irrational character table.  The smallest standard integral table algebra examples have rank $3$, and occur in both the asymmetric and symmetric situations.

\begin{exa}\label{asymrk3} {\rm The standard basis $\B = \{1, b, b^*\}$ of the adjacency algebra of an asymmetric  table algebra of rank $3$ satisfies $bb^* = (2u+1)1 + ub + ub^*$ and $b^2=ub+(u+1)b^*$ for some $u \ge 0$.  (These table algebras are realized as the adjacency algebras of association schemes corresponding to doubly regular tournaments of order $n=4u+3$.) The minimal polynomial of both $b$ and $b^*$ is $\mu(x)=(x-(2u+1))(x^2+x+(u+1))$, and it follows that $\Q \B = \Q \oplus \Q\left[\frac{-1+\sqrt{-n}}{2}\right]$ for $n=4u+3$.  
Since the second component does not split over $\Q$, the idempotents of $\Q \B$ will be 
$$ e_0 = \frac{1}{n}(1 + b + b^*) \mbox{ and } e_1 = \frac{n-1}{n} 1 - \frac{1}{n}(b+b^*). $$ 

A $\Z$-basis for the maximal order $\Lambda_0$ of $\Q \B$ is $\{e_0, e_1, b e_1\}$, and when we write $1$, $b$, and $b^*$ in terms of this basis we get 
$$ 1 = e_0 + e_1, b = (2u+1)e_0 + be_1 \mbox{ and } b^* = (2u+1)e_0 - e_1 - b e_1. $$
Reducing the basis $\{1, b, b^*\}$ of $\Z \B$ gives
$$\Z \B = \langle ne_0, e_0+e_1,  (2u+1)e_0+ b e_1 \rangle.$$  
(Here we denote $\langle v_1, v_2, v_3 \rangle$ for the $\Z$-lattice spanned by $\{v_1,v_2,v_3\}$, we will also use the same notation later for locally integral $\Z_p$-lattices.) 
This tells us $[\Lambda_0: \Z \B] = n$, and so to compute the zeta function of $\Z \B$ we need to consider its completions at the primes $p$ dividing $n$.   
}\end{exa} 

\begin{exa} \label{symrk3} {\rm Let $\B = \{ 1, b_1, b_2 \}$ be the standard basis of a symmetric table algebra of rank $3$ that has an irrational character table.  In this situation there exists a $u \ge 0$ such that $b_1^2 = (2u)1 + (u-1)b_1 + ub_2$ and $b_1b_2 = ub_1+ub_2 = b_2b_1$, and the minimal polynomial of $b_1$ and $b_2$ has the irreducible factorization $\mu(x) = (x-2u)(x^2+x-u)$.  (These table algebras are realized by the adjacency algebras of association schemes corresponding to conference graphs of order $n=4u+1$ when $n$ is not a perfect square.  Payley graphs are one such example.)  For these table algebras, $\Q \B \simeq \Q \oplus \Q[\frac{-1+\sqrt{n}}{2}]$ and the maximal order $\Lambda_0$ of $\Z \B$ is $\langle e_0 , e_1, b_1e_1 \rangle$. 
Writing the elements of $\B$ in terms of this basis, we have 
$$ 1 = e_0 + e_1, b_1 = (2u)e_0 + b_1e_1, \mbox{ and } b_2 = (2u)e_0 -e_1 -b_1e_1. $$ 
Reducing the basis $\{1, b_1, b_2\}$  gives  $\Z \B = \langle e_1+e_0, b_1e_1+ 2ue_0 , ne_0 \rangle$, so $[\Lambda_0: \Z \B] = n$.  Again for the zeta function calculation this tells us we need to consider completions at the primes $p$ that divide $n$.}
\end{exa}

 In both examples, we are interested in the $p$-local zeta function of $\Z_p \B$, for odd primes $p$ dividing $n$. Let $n=p^{v_p(n)}v$, where $\gcd(p,v)=1$. Then we have three cases to consider: $v_p(n)$ is odd,  $v_p(n)$ is even and $v$ is a nonsquare modulo $p$, and $v_p(n)$ is even and $v$ is a square modulo $p$. Since we focus on table algebras of rank $3$ with irrational-valued characters, there must be some odd prime $p$ dividing $n$ such that $v_p(n)$ is odd, so we start with this case, where  one of the simple components of $\Q_p \B$ will be a ramified quadratic extension.

\begin{thm} 
\label{rank3}
Let $\B$ be the standard basis of a rank $3$ table algebra with irrational character table; i.e. one of the above two cases depending on whether the order $n \equiv \pm 1 \mod 4$.

Let $p$ be a prime such that $p$ divides $n$ but $p^2$ does not divide $n$.  

Then $\zeta_{\Z_p \B}(s) = (p^{1-2s}-p^{-s}+1)\zeta_{\Z_p}(s)^2$.
\end{thm}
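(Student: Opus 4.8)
The plan is to apply the Bushnell--Reiner integral formula \eqref{genuszetaeqn} together with Lemma \ref{smallintegrals}. First I would make the local structure explicit. Since $p \mid n$ but $p^2 \nmid n$, we may write $\pm n = p v$ with $v \in \Z_p^{\times}$, so $\sqrt{\pm n}$ generates a ramified quadratic extension $K$ of $\Q_p$; hence the irrational component of $\Q\B$ completes to $K$ and $A_p := \Q_p \B \cong \Q_p \times K$. Writing $R$ for the valuation ring of $K$, with uniformizer $\pi$ and residue field $\mathbb{F}_p$ (so the parameter $q$ of Lemma \ref{smallintegrals} equals $p$ on both components), the maximal order is $\Lambda_{0,p} = \Z_p \times R$. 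Reducing the displayed $\Z$-bases of $\Z\B$ from Example \ref{asymrk3} (resp. \ref{symrk3}) modulo $p$ and $\pi$, I would show that $\Z_p\B$ is the fibre-product (``glued'') order
\[ D = \{ (a,r) \in \Z_p \times R : \bar a = \bar r \text{ in } \mathbb{F}_p \}, \]
which has index $p$ in $\Lambda_{0,p}$ and conductor $\mathfrak{f} = p\Z_p \times \pi R$.

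Next I would classify the genera of full $D$-lattices in $A_p$. Using parts (i)--(iii) of the structural lemma, every such lattice is isomorphic to one $M$ with $1 \in M \subseteq \Lambda_{0,p}$, and after scaling by $A_p^{\times}$ I may assume $Me_0 = \Z_p$ and $Me_1 = R$. Using that the projections $Me_0, Me_1$ are full, multiplication by $(p,0),(0,\pi) \in \mathfrak f \subseteq D$ forces $\mathfrak f \subseteq M$, so $M/\mathfrak f$ is a $D/\mathfrak f \cong \mathbb{F}_p$-subspace of $\Lambda_{0,p}/\mathfrak f \cong \mathbb{F}_p \times \mathbb{F}_p$ surjecting onto both factors. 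The only possibilities are the full space (giving $M = \Lambda_{0,p}$) and the ``graph'' lines $\{(x,\lambda x) : x \in \mathbb{F}_p\}$ for $\lambda \in \mathbb{F}_p^{\times}$; multiplication by a unit $(\alpha,\beta) \in \Lambda_{0,p}^{\times}$ carries one graph line to another, so all of these are isomorphic to $D$ (the case $\lambda = 1$). Hence there are exactly two genus representatives, $D$ and $\Lambda_{0,p}$, and $\zeta_{\Z_p\B}(s) = Z_D(D,D;s) + Z_D(D,\Lambda_{0,p};s)$.

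Finally I would evaluate the two genus integrals. For $M = D$ one has $\{D:D\} = D$, generalized index $(D:D)=1$, and $\mu(\Aut D) = \mu(D^{\times}) = \tfrac{1}{p-1}$; splitting $D \cap A_p^{\times}$ according to whether $\bar a = \bar r$ is nonzero (both components units, contributing $\tfrac{1}{p-1}$ via Lemma \ref{smallintegrals}(3)) or zero (both components in $\mathfrak f$, contributing $\tfrac{p^{-2s}}{(1-p^{-s})^2}$ via Lemma \ref{smallintegrals}(2)) gives $Z_D(D,D;s) = 1 + (p-1)\tfrac{p^{-2s}}{(1-p^{-s})^2}$. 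For $M = \Lambda_{0,p}$ one has $\{\Lambda_{0,p}:D\} = \mathfrak f$, generalized index $(D:\Lambda_{0,p}) = \tfrac1p$, and $\mu(\Aut \Lambda_{0,p}) = 1$, so the integral over $\mathfrak f \cap A_p^{\times}$ yields $Z_D(D,\Lambda_{0,p};s) = p^{s}\cdot \tfrac{p^{-2s}}{(1-p^{-s})^2} = \tfrac{p^{-s}}{(1-p^{-s})^2}$. Adding the two and placing everything over $(1-p^{-s})^2$ collapses the numerator to $1 - p^{-s} + p\,p^{-2s}$, and since $\zeta_{\Z_p}(s) = (1-p^{-s})^{-1}$ this is exactly $(p^{1-2s}-p^{-s}+1)\zeta_{\Z_p}(s)^2$.

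The main obstacle I expect is the genus classification in this ramified, non-split setting: one must be sure the list $\{D, \Lambda_{0,p}\}$ is complete and, in particular, that all the graph lattices $M_\lambda$ genuinely coalesce into the single class of $D$ rather than producing extra contributions. A secondary technical point is bookkeeping the generalized (fractional) index $(D:\Lambda_{0,p}) = 1/p$ and the measure normalizations on the two components, so that the product of the local integrals of Lemma \ref{smallintegrals} is taken with respect to the measure normalized by $\mu(\Lambda_{0,p}^{\times}) = 1$.
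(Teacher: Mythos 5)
Your proposal is correct and takes essentially the same route as the paper: identify $\Z_p\B$ as the fibre-product order $\{(a,r)\in\Z_p\oplus R : a \equiv r \bmod \pi\}$ of index $p$ in the maximal order, observe there are exactly two genera represented by $\Z_p\B$ and $\Lambda_{0,p}$, and evaluate the two Bushnell--Reiner genus integrals via Lemma \ref{smallintegrals}, obtaining the same values $1+(p-1)p^{-2s}(1-p^{-s})^{-2}$ and $p^{-s}(1-p^{-s})^{-2}$. The only difference is that you spell out the genus classification explicitly (via $\mathbb{F}_p$-subspaces of $\Lambda_{0,p}/\mathfrak{f}\cong\mathbb{F}_p\times\mathbb{F}_p$ surjecting onto both factors), a step the paper instead outsources to the identical argument for $\Z_p C_p$ in \cite[Section 3.4]{Bushnell-Reiner1980}.
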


\begin{proof}

We have that $p$ is an odd prime with $n=pv$ for an integer $v$ with $(p,v)=1$.
Then the completion at $p$ gives $A := \Q_p\B \cong \Q_p \oplus K$ where $K=\Q_p(\pi)$ is a ramified quadratic extension of $\Q_p$ with ring of integers $R$ and  uniformizer $\pi$ satisfying $(\pi R)^2=pR$, so the residue class field $R/\pi R \cong \mathbb{F}_p$.  Since $\frac12 \in \Z_p$, we have that $R = \Z_p[\sqrt{\pm n}]$, the sign depending on the congruence of $n$ mod $4$.  The uniformizer of $R$ can be taken to be $\pi=\sqrt{\pm n}$ since $\pi^2 = \pm pv$ will always have $p$-valuation $1$ in $\Z_p$. 

The maximal order of $\Q_p \B$ is $\Lambda'_p \cong \Z_p \oplus R$, where $R=\Z_p[\pi]$. Let $e_0$ and $e_1$ be the primitive idempotents of $\Q_p\B$, with $\Q_p\B = \Q_pe_0 \oplus K e_1$.  In $\Q_p \B e_1$, the element $e_1+2b_1e_1$ can be identified with $\pi e_1 = \sqrt{\pm n} e_1$.  Then $\Lambda'_p$ has $\Z_p$-basis $\{e_0,e_1,\pi e_1 \}$, and with respect to this basis, $\Lambda_p \coloneqq \Z_p\B = \langle e_0+e_1, n e_0, \pi e_1 \rangle$. Therefore, we can represent $\Lambda_p$ as   
$$\Lambda_p=\{(\alpha, \beta) \in \Z_p \oplus R : \alpha \pmod{p} \equiv \beta \pmod{\pi} \}.$$

From this point the calculation of the zeta function $\zeta_{\Lambda_p}$ follows an identical argument as the calculation of the zeta function of the group ring for the cyclic group of order $p$ outlined in \cite[Section 3.4]{Bushnell-Reiner1980}. 
Since $(\Lambda_p':\Lambda_p)=p$, there are only two genera of $\Lambda_p$-lattices, with representatives $\Lambda'_p$ and $\Lambda_p$.  Therefore, we need two genus zeta functions $Z(\Lambda_p',\Lambda_p; s)$ and $Z(\Lambda_p,\Lambda_p; s)$. 

We first calculate $$Z(\Lambda_p',\Lambda_p; s)= \mu({\Lambda'_p}^{\times})^{-1} (\Lambda_p:\Lambda'_p)^{-s} \int_{A^\times}  \Phi_{\{\Lambda'_p:\Lambda_p\}}(x) \left\Vert x \right\Vert_A^s d^\times x.$$ Note that $\{\Lambda_p':\Lambda_p\}=p\Z_p \oplus \pi R$, so by Lemma \ref{smallintegrals},  
 $$Z(\Lambda_p',\Lambda_p; s)= p^s \int_{p\Z_p-\{0\}} \left\Vert a \right\Vert_{\Q_p}^s d^\times a    \int_{\pi R -\{0\}}  \left\Vert b\right\Vert_K^s  d^\times b = p^{-s}(1-p^{-s})^{-2}.$$ 

On the other hand, $\{\Lambda_p:\Lambda_p\}=\Lambda_p$, and  $A^\times \cap \Lambda_p$ equals the disjoint union between $ \Lambda_p^\times \dot\cup (A^\times \cap (p \Z_p \oplus \pi R))$, the latter being the domain of integration in the previous genus zeta function. Therefore, 
$$\begin{array}{rcl} 
Z(\Lambda_p,\Lambda_p; s) &=& \mu(\Lambda_p^\times)^{-1} (\Lambda_p:\Lambda_p)^{-s} \left(\int_{\Lambda_p^\times} \left\Vert x \right\Vert_A^s d^\times x + p^{-2s}(1-p^{-s})^{-2}\right) \\
&=&1+(p-1)p^{-2s}(1-p^{-s})^{-2}.
\end{array}$$

Since the local Dedekind zeta function is $\zeta_{\Z_p}(s)=(1-p^{-s})^{-1}$, we get  $$\zeta_{\Lambda_p}(s)=(p^{1-2s}-p^{-s}+1)\zeta_{\Z_p}(s)^2.$$ 
 \end{proof}

It is interesting to see that $\zeta_{\Lambda_p}(s)=\zeta_{\Z_pC_p}(s)$, the (local) zeta function of the group ring for the cyclic $p$-group $C_p$.  This was also the case for general association schemes of order $p$ in \cite{Hanaki-Hirasaka2016}.

When we consider the extension of the previous result to the case where $n$ is divisible by a higher powers of $p$, we need to first find representatives for the isomorphism classes of $\Z_p\B$-lattices between $\Z_p\B$ and the maximal order of $\Q_p\B$.  

\begin{lem} 
\label{lattice-lemma}
Let $\B$ be the defining basis of a rank $3$ table algebra of order $n$ with irrational character table.  From Examples \ref{asymrk3} and \ref{symrk3}, we know that $n>1$ is odd, $n$ is not a square, and its character table is determined by the congruence of $n$ mod $4$.   

Let $p$ be an odd prime for which  $v_p(n)=2m+1$ is odd. Let $v$ be the integer coprime to $p$ for which $\pm n = p^{2m+1}v$ and $n \equiv \pm 1 \mod 4$.

\begin{enumerate} 
\item $\Q_p\B \simeq \Q_p \oplus \Q_p(\sqrt{\pm n})$, and the maximal order $\Lambda_0$ of $\Q_p\B$ is isomorphic to $\Z_p \oplus \Z_p[\pi]$, where $\pi^2 = pv$. 

\item Let $\Lambda=\Z_p\B$ and let $\Lambda_0$ be the maximal $\Z_p$-order of $\Q_p\B$. The $\Lambda$-lattices satisfying $\Lambda \subseteq M \Lambda_0$ are $\Z_p$-lattices $M(r,i,j) = \langle e_0+e_1, re_0+p^i \pi, p^j e_0 \rangle$, where $0 \le i \le m$, $0 \le j \le 2m+1$, and $0 \le r < p^{2m+1}$ satisfy the conditions $m+i+1 \ge j$ and $m-i \ge j-k$. 

\item  Two of the $\Lambda$-lattices $M(r,i,j)$ and $M(s,i',j')$ with $(r,i,j) \ne (s,i',j'
)$
in the previous list will be isomorphic as $\Lambda$-lattices if and only if $i=i'$, $j=j'$, and one of the following holds: 

$\bullet$ $r,s \not\equiv 0 \mod p$, 

$\bullet$ $s=0$ and $1 \le 2i+1 \le v_p(r) < j$, or 

$\bullet$ $1 \le v_p(s)=2i+1 \le v_p(r) < j$.
\end{enumerate} 

\end{lem}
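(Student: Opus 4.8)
The plan is to treat the three parts in order, working throughout in explicit coordinates with respect to the basis $\{e_0, e_1, \pi e_1\}$ of $\Lambda_0$ and invoking parts (i)--(iii) of the structural Lemma above. For part (1), I would complete the rational decompositions of Examples \ref{asymrk3} and \ref{symrk3} at $p$: in both cases $\Q\B \simeq \Q \oplus \Q(\sqrt{\pm n})$, and since $v_p(n)=2m+1$ is odd we have $\sqrt{\pm n} = p^m\sqrt{\pm pv}$, so $\Q_p(\sqrt{\pm n}) = \Q_p(\sqrt{pv})$ is a ramified quadratic extension (the radicand having odd $p$-valuation). As $p$ is odd, its ring of integers is $\Z_p[\pi]$ with $\pi^2 = pv$, giving $\Lambda_0 \simeq \Z_p \oplus \Z_p[\pi]$. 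Rewriting $1,b,b^*$ (resp.\ $1,b_1,b_2$) in the basis $\{e_0,e_1,\pi e_1\}$, using $\sqrt{\pm n}\,e_1 = p^m\pi e_1$, and reducing the generators should identify $\Z_p\B = \langle e_0+e_1,\ p^m\pi e_1,\ p^{2m+1}e_0\rangle = M(0,m,2m+1)$, while $\Lambda_0 = M(0,0,0)$.

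For part (2), given a $\Lambda$-lattice $N$, part (ii) replaces $N$ by an isomorphic copy $M$ with $1 \in M \subseteq \Lambda_0$, and part (i) supplies a block upper triangular $\Z_p$-basis relative to $e_0,e_1$. I would put this into the normal form $M(r,i,j) = \langle e_0+e_1,\ re_0 + p^i\pi e_1,\ p^je_0\rangle$, reading off $j$ from $M \cap \Q_pe_0 = p^j\Z_pe_0$, reading off $i$ as the $\pi$-depth of the image of $M$ in $Ke_1$, and normalizing $r$ modulo $p^j$ by adding multiples of $p^je_0$. The inequalities come from two separate requirements. Demanding $\Lambda \subseteq M$ forces $p^{2m+1}e_0 \in M$ (whence $j \le 2m+1$) and $p^m\pi e_1 \in M$; writing the latter in the basis of $M$ and requiring $p$-integral coefficients yields $m - i + v_p(r) \ge j$, which is the displayed inequality $m-i \ge j-k$ upon setting $k = v_p(r)$. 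Demanding that $M$ be $\Lambda$-stable, I would use that $1,\theta,\theta^2$ span $\Lambda$ over $\Z_p$ for $\theta = ne_0 + p^m\pi e_1$ (a rank/index count confirms this, both sides having index $p^{3m+1}$ in $\Lambda_0$), so stability is equivalent to $\theta M \subseteq M$; applying $\theta$ to the generator $re_0+p^i\pi e_1$ and using $\pi^2=pv$ produces the pure term $p^{m+i+1}v\,e_1$, whose membership in $M \cap \Q_pe_1 = p^j\Z_pe_1$ gives exactly $m+i+1 \ge j$. Testing $\theta$ against the remaining generators yields nothing new.

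For part (3), by part (iii) any $\Lambda$-isomorphism is multiplication by a unit $u = (u_0,u_1) \in \Z_p^\times \times R^\times$, so $M(r,i,j) \cong M(s,i',j')$ iff $u\,M(r,i,j) = M(s,i',j')$ for some such $u$. I would first extract the invariants: $[\Lambda_0 : M(r,i,j)] = p^{i+j}$ together with $u\,(M\cap\Q_pe_0) = p^j\Z_pe_0$ forces $j = j'$, and comparing the determinants of the projections to $Ke_1$ forces $i = i'$ while pinning down $v_p(\beta) \ge i$, where $u_1 = \alpha + \beta\pi$ with $\alpha \in \Z_p^\times$. With $i,j$ fixed, I would transport the three generators by $u$ and compare with those of $M(s,i,j)$; using $\pi^2 = pv$ together with $v_p(\beta)\ge i$, the only surviving constraint on the gluing parameters is a congruence of the form $s \equiv \epsilon\,r \pmod{p^{\min(2i+1,\,j)}}$ with $\epsilon = u_0/\alpha$ a free unit. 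The threshold $2i+1$ appears precisely because the $\pi$-part of $u_1$ (of depth $\ge i$) feeds into $r$ only after interacting with the generator $p^i\pi e_1$ through $\pi^2=pv$, i.e.\ only at $p$-valuation $\ge 2i+1$. Reading off orbits, the $r$ with $v_p(r) < 2i+1$ are separated by exact valuation (units forming one class), whereas all $r$ with $2i+1 \le v_p(r) < j$, together with $r=0$, are identified; intersecting with the part-(2) constraints on which $(r,i,j)$ actually occur collapses everything to the stated trichotomy.

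The main obstacle is the sufficiency-and-necessity bookkeeping in part (3): I must show that the coupled congruences obtained from matching all three transported generators are simultaneously solvable in $(u_0,\alpha,\beta)$ exactly under the three listed conditions, neither missing an identification nor manufacturing a spurious one. The delicate point is that the constraint $v_p(\beta)\ge i$ channels every admissible perturbation of $r$ to scale $p^{2i+1}$, so that $2i+1$, rather than $i$ or $j$, governs when $r$ may be moved to $0$; confirming this in both directions—constructing explicit units for sufficiency and ruling out finer equivalences for necessity—is where the real work lies.
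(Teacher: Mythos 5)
Your parts (1) and (2) are essentially the paper's own argument: the same identification of the ramified quadratic component with $\pi^2=pv$, the same Hermite-normal-form parametrization of the lattices $M(r,i,j)$, the same derivation of $m-i\ge j-k$ from $p^m\pi e_1\in M$, and the same derivation of $m+i+1\ge j$ from stability. Your one variation --- reducing $\Lambda$-stability to stability under $\theta=ne_0+p^m\pi e_1$ via the index count showing $\Lambda=\Z_p[\theta]$ --- is correct and equivalent to the paper's reduction to multiplication by $p^m\pi e_1$ (which uses $p^{2m+1}\Lambda_0\subseteq\Lambda$).

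Part (3), however, contains a genuine error, not just deferred bookkeeping: the ``only surviving constraint'' is \emph{not} of the form $s\equiv\epsilon r\pmod{p^{\min(2i+1,j)}}$ with $\epsilon=u_0/\alpha$ a free unit, because $\epsilon$ is not free. Write $u=u_0e_0+(\alpha+\beta_0p^i\pi)e_1$ (your $v_p(\beta)\ge i$, so $\beta=p^i\beta_0$). Transporting the generator $e_0+e_1$ into $M(r,i,j)$ forces $u_0\equiv\alpha+\beta_0 r\pmod{p^j}$, i.e.\ $\epsilon\equiv 1+(\beta_0/\alpha)r\pmod{p^j}$, so in particular $\epsilon\equiv 1\pmod{p^{\min(v_p(r),j)}}$; transporting $se_0+p^i\pi e_1$ then forces $su_0\equiv\alpha r+\beta_0p^{2i+1}v\pmod{p^j}$. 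Eliminating $u_0$ yields the \emph{coupled} congruence $\alpha(r-s)\equiv\beta_0\bigl(rs-p^{2i+1}v\bigr)\pmod{p^j}$, which is the paper's criterion (with $\alpha$ normalized to $1$); its solvability depends on $v_p(r-s)$ and $v_p(rs-p^{2i+1}v)$, not merely on the valuations of $r$ and $s$ separately. Concretely, take $m\ge 3$, $i=1$, $j=3$, $r=p$, $s=2p$ (both admissible by part (2)): your criterion declares $M(p,1,3)\cong M(2p,1,3)$ since the valuations agree, but the true criterion requires $\beta_0(2p^2-p^3v)\equiv-\alpha p\pmod{p^3}$, impossible since the left side has valuation at least $2$; equivalently, you would need $\epsilon\equiv 2\pmod{p^2}$ while the coupling forces $\epsilon\equiv 1\pmod p$. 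A direct generator-transport check confirms no unit works. Your resulting orbit picture (``parameters of equal valuation below $2i+1$ are identified'') thus contradicts the very trichotomy you are proving, which for nonzero non-unit parameters demands $v_p(s)=2i+1$ exactly. Deriving this bilinear congruence and carrying out its case-by-case solvability analysis is the actual content of part (3), and it is exactly the step your outline gets wrong.
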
 

\begin{proof} 
\begin{enumerate} 
\item Since $p$ is odd and $\Q \B \simeq \Q \oplus \Q(\frac{-1 + \sqrt{\pm n}}{2})$, when we complete the second component with respect to the $p$-adic valuation its ring of integers will be equal to $\Z_p[\sqrt{\pm n}]$, the sign agreeing with $n \equiv \pm 1 \mod 4$.  Since $v_p(n)=2m+1$, 
$\Q_p(\sqrt{\pm n})$ will be a ramified quadratic extension of $\Q_p$.  Writing $\pm n = p^{2m+1}v$ with $v$ a positive or negative integer coprime to $p$, we have that the maximal order $\Lambda_0$ in $\Q_p \B$ is $\Z_p \oplus \Z_p[\pi]$, where $\pi^2 = pv$.  

\item Let $e_0$ and $e_1$ be the primitive idempotents of $\Lambda_0$.  We fix the ordered  $\Z_p$-basis $\{\pi e_1, e_1, e_0\}$ of $\Lambda_0$. Then $\Lambda$ is the $\Z_p$-lattice $\langle  p^m\pi e_1, e_1+e_0, p^{2m+1}e_1 \rangle$. This is in Hermite normal form with respect to our fixed basis of $\Lambda_0$, which we represent with the matrix  $ \left(\begin{smallmatrix} p^m& 0 & 0 \\ 0 & 1 & 1 \\ 0 & 0 & p^{2m+1} \end{smallmatrix}\right).$

The $\Lambda$-lattices $M$ such that $\Lambda \subseteq M \subseteq \Lambda_0$ will correspond to the matrix forms given by $\left(\begin{smallmatrix} p^i& 0 & r \\ 0 & 1 & 1 \\ 0 & 0 & p^{j}\end{smallmatrix}\right),$ where $0 \le i \le m$ and $ 0 \le j \le 2m+1$, and $0 \le r < p^{2m+1}$, that satisfy the arithmetic conditions imposed by the requirements $\Lambda \subseteq M$ and $\Lambda M \subseteq M$.  

Let $M(r,i,j)$ be the $\Z_p$-lattice whose Hermite normal form is determined by the parameters $r$, $i$, and $j$.  If $r=0$, it is clear that $\Lambda \subseteq M(0,i,j)$.  Suppose $r \ne 0$, and write $r = p^k r_0$ with $(r_0,p)=1$ and $0 \le k < j$.  It is easy to see that $\Lambda \subseteq M(r,i,j)$ will hold if and only if $p^m\pi e_1 \in M$, in which case
$$\begin{array}{rcl}
p^m \pi e_1 &=& p^{m-i}(p^i \pi e_1 + r_0p^k e_0)-p^{m-i+k-j}r_0(p^j e_0),
\end{array}$$ 
 for $p^{m-i+k-j}r_0 \in \Z_p$, so it must be the case that $m+k \ge i+j$.  

Suppose that either $r=0$, or $r = p^kr_0$ with $(r_0,p)=1$, $0 \le k < j$, and $m+k \ge i+j$.  To show $\Lambda M(r,i,j) \subseteq M(r,i,j)$, it suffices to show $M(r,i,j)$ is closed under multiplication by the Hermite normal form basis elements of $\Lambda$.  Since it is clear that $\Lambda$ contains $p^{2m+1}\Lambda_0$ and $\Lambda \subseteq M(r,i,j)$, we only need to check that $M(r,i,j)$ is closed under multiplication by $p^m\pi e_1$, and this comes down to $p^m\pi e_1 (re_0 + p^i \pi e_1) = p^{m+i+1}ve_1 \in M(r,i,j)$.  So there needs to be a $\Z_p$-solution to 
$$ p^{m+i+1}v e_1 = a(e_0+e_1)+b(p^kr_0e_0+p^i \pi e_1)+c(p^je_0) = (a+bp^kr_0+cp^j)e_0+(a+bp^i\pi)e_1. $$
Solving this we get $a=p^{m+i+1}v$, $b=0$, and $c=p^{m+i+1-j}v$. This is a $\Z_p$-solution if and only if $m+i+1 \ge j$. Note that this condition does not depend on whether or not $r=0$.  So $M(r,i,j)$ is a $\Lambda$-lattice with $\Lambda \subseteq M(r,i,j) \subseteq \Lambda_0$ when $m+i+1\ge j$, or when $r\ne 0$, $v_p(r)=k$, and $m+k \ge i+j$.  

\item Now we consider the $\Lambda$-lattice isomorphism classes of the $M(r,i,j)$ satisfying the above conditions using condition (iii) of Lemma 3.1. In order for two of these lattices $M = M(s,i',j')$ and $N=M(r,i,j)$ 
to be isomorphic, there must be a unit $u \in N \cap \Lambda_0$ for which $u^{-1} \in M \cap \Lambda_0$ and $uM=N$.  Since multiplication by a unit of $\Lambda_0$ preserves the index of $M$ in $\Lambda_0$, we must have that $M$ and $N$ share the same $i$ and $j$ parameters.  Since $j=0$ forces $r=s=0$, when $M \ne N$ we must have that $j>0$. For $u = \alpha(e_0+e_1)+\beta(re_0+p^i \pi e_1) + \gamma(p^je_0) \in N$ to be a unit of $\Lambda_0$, we must have that $\alpha \not\equiv 0 \mod p$ and $(\alpha +\beta r + \gamma p^j) \not\equiv 0 \mod p$.  Since scaling by units of $\Z_p$ preserves $M$ and $N$, we can assume $\alpha=1$, and our condition for $u$ to be a unit becomes $(1 + \beta r) \not\equiv 0 \mod p$.  

Next, we determine the conditions on $\beta$ and $\gamma$ in order for $u^{-1} \in M=M(s,i,j)$.  We have that $u^{-1} = (1+\beta r + \gamma p^j)^{-1} e_0 + (1 + \beta p^i \pi)^{-1} e_1$, so the inverse of its second component is $(1 + \beta p^i \pi)^{-1} = 1 -\beta p^i \pi + \sum_{\ell > 1} (-1)^{\ell} \beta^{\ell} (p^i \pi)^{\ell}$. Therefore, for $u^{-1} \in M$ we must be able to find $\delta, \epsilon, \phi \in \Z_p$ for which 
$$ u^{-1} = \delta(e_0+e_1)+\epsilon(se_0+p^i\pi e_1) + \phi(p^je_0) = (\delta + \epsilon s + \phi p^j)e_0 + (\delta + \epsilon p^i \pi) e_1. $$
Now, 
$$\begin{array}{rcl}
(\delta + \epsilon p^i\pi) &=& (1 + \beta p^i \pi)^{-1} \\
&=& 1 - \beta p^i \pi + \beta^2 p^{2i} \pi^2 - \beta^3 p^{3i} \pi^3 + \dots \\
&=& (1 + \beta^2 p^{2i}(pv) + \beta^4 p^{4i}(pv)^2 + \dots) - \beta p^i \pi (1 + \beta^2 p^{2i}(pv) + \beta^4 p^{4i}(pv)^2 + \dots),  
\end{array}$$ 
so $(1 + \beta p^i \pi)^{-1} = \delta - \beta \delta p^i \pi$ with $\delta = \sum_{\ell \ge 0} \beta^{2 \ell} p^{2i \ell} (pv)^{\ell} = (1 - \beta p^{2i+1}v)^{-1}  \in \Z_p$.  We also have $\epsilon = - \beta \delta$.  We must also have $(\delta - \beta \delta s + \phi p^j) = (1 + \beta r + \gamma p^j)^{-1}$.  Let $\alpha = (1+\beta r)^{-1}$.  Then 
$$(1+\beta r + \gamma p^j)^{-1} = \alpha( 1 - \gamma \alpha p^j + \gamma^2 \alpha^2 p^{2j} - \gamma^3 \alpha^3 p^{3j} + \dots), $$
so we must have $(\delta - \beta \delta s) \equiv \alpha \mod p^j$, which is equivalent to $(1 - \beta^2 p^{2i+1} v) \equiv (1 - \beta s) (1 + \beta r) \mod p^j$.  This reduces to $\beta^2(rs - p^{2i+1}v) \equiv \beta(r-s) \mod p^j$.  Once we have found a $\beta \in \Z_p$ satisfying this congruence the existence of a suitable $\psi \in \Z_p$ for which $u^{-1} = (e_0+e_1)-\beta \delta(se_0+p^i\pi e_1) + \phi p^j e_0$ will be automatic. 

Finally, consider the condition $uM = N$.  Since $u(p^je_0) = (1+\beta r) p^j e_0$, we have that $uM = \langle e_0+e_1, u(se_0+p^i \pi e_1), p^j e_0 \rangle$, and so showing $uM=N$ reduces to showing $u(se_0 + p^i \pi e_1) \in N$.  Now,  
$$\begin{array}{rcl} 
u(se_0+p^i \pi) &=& (se_0+p^i \pi e_1) ((1+\beta r + \gamma p^j)e_0 + (1 + \beta p^i \pi) e_1 \\
&=& (s + \beta sr + s \gamma p^j) e_0 + (\beta p^{2i+1}v + p^i \pi ) e_1 \\ 
&=& \beta p^{2i+1}(e_0+e_1)+(re_0+p^i \pi)+(s-r+\beta rs-\beta p^{2i+1}v + s\gamma p^j) e_0, 
\end{array}$$
so this will lie in $N$ if and only if $\beta$ is a solution to $\beta(sr-p^{2i+1}v)\equiv r-s \mod p^j$.  Note that this condition implies the condition needed for $u^{-1} \in M$.

The existence of a solution to $\beta(sr-p^{2i+1}v) \equiv (r-s) \mod p^j$ depends on $j, i, r$, and $s$. If $s=0$ and $r \ne 0$ with $v_p(r)=k$, then the congruence has a solution if and only if $j >k\ge 2i+1 \ge 1$.  When $r,s \ne 0$, we can assume $j > v_p(r) =k \ge v_p(s) = \ell \ge 0$.  In the case when $2i+1 \ge j$, the congruence reduces to $\beta(rs) \equiv (r-s) \mod p^j$, and this will have a solution only in the case $k=\ell=0$.  In the case $2i+1 < j$, a solution exists if and only if $k=\ell=0$ or $1 \le \ell = 2i+1 \le k < j$.
\end{enumerate}
\end{proof}

We can now compute the $p$-local zeta function of a rank $3$ integral table algebra when the order $n$ has $v_p(n)=3$ for an odd prime $p$.

\begin{thm} 
\label{rank3highpower}
Let $\B$ be the standard basis of a rank $3$ table algebra order $n=p^3v$, $(p,v)=1$, that has an irrational character table.

Then $\zeta_{\Z_p \B}(s) =  (p^{4-8s} - p^{3-7s} + p^{3-6s}  + p^{3-5s} - p^{2-5s}  + p^{2-3s} -p^{1-3s}  + p^{1-2s}  - p^{-s} + 1)\zeta_{\Z_p}(s)^2$.
\end{thm}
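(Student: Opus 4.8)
The plan is to evaluate $\zeta_{\Z_p\B}(s)=\sum_{M}Z_{\Lambda_p}(\Lambda_p,M;s)$ via the Bushnell--Reiner formula \eqref{genuszetaeqn}, following the template of Theorem \ref{rank3} but with the larger family of genera supplied by Lemma \ref{lattice-lemma} at $m=1$. First I would specialize Lemma \ref{lattice-lemma}(2)--(3) to $v_p(n)=3$: the admissible triples satisfy $0\le i\le 1$ and $0\le j\le 3$, and after collapsing the lattices $M(r,i,j)$ under the isomorphism criterion of part (3) one obtains exactly eight genera. The types $(i,j)\in\{(0,0),(1,0),(1,1),(1,2),(1,3)\}$ and $(0,2)$ each give a single class, while $(0,1)$ splits into the classes $r=0$ and $r\in\Z_p^{\times}$; here $(0,0)$ is the maximal order $\Lambda_0$ and $(1,3)$ is $\Lambda_p$ itself. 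Reading indices off the Hermite forms gives $[M(r,i,j):\Lambda_p]=p^{4-i-j}$, so $(\Lambda_p:M)^{-s}=p^{(4-i-j)s}$; moreover the conductor $\{\Lambda_0:\Lambda_p\}=p^3\Z_p\oplus\pi^5 R$ has index $p^8$ in $\Lambda_0$, which matches the degree of the target polynomial in $p^{-s}$ and serves as a running consistency check.

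For each representative I would then assemble the three ingredients of \eqref{genuszetaeqn}: the associated order $\{M:M\}$ together with the measure $\mu(\Aut M)=\mu(\{M:M\}^{\times})$, the colon lattice $\{M:\Lambda_p\}=\{x\in A:Mx\subseteq\Lambda_p\}$, and the integral $\int_{A^{\times}}\Phi_{\{M:\Lambda_p\}}(x)\|x\|^s\,d^{\times}x$. The two lattices that contain $e_0$ and $e_1$ separately, namely $\Lambda_0$ and $M(0,1,0)=\Z_p\oplus\Z_p[p\pi]$, are product orders, so their colon lattices are products of a $\Z_p$-lattice in $\Q_p$ with an $R$-lattice in $K$, and the norm and Haar measure factor across the two components. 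The corresponding integral then splits as a product of two one-dimensional integrals, each evaluated by stratifying into $\pi$-adic valuation shells and applying Lemma \ref{smallintegrals}. In particular the maximal-order term comes out to $\mu(\Lambda_0^{\times})^{-1}p^{4s}\cdot\frac{p^{-8s}}{(1-p^{-s})^2}=\frac{p^{-4s}}{(1-p^{-s})^2}$, just as in the proof of Theorem \ref{rank3}.

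The main obstacle is the remaining six, linked, genera, the extreme case being $\Lambda_p$ itself, where the congruences $b\equiv a\bmod p^3$ and $c\equiv 0\bmod p$ defining $\Lambda_p$ couple the two components; here neither the integration domain $A^{\times}\cap\{M:\Lambda_p\}$ nor the unit group $\{M:M\}^{\times}$ factors, and for the $r\in\Z_p^{\times}$ class one must first locate the associated order $\{M:M\}$, which is strictly larger than $M$. My strategy would extend the decomposition $A^{\times}\cap\Lambda_p=\Lambda_p^{\times}\,\dot\cup\,(A^{\times}\cap(p\Z_p\oplus\pi R))$ from Theorem \ref{rank3}: one checks that a non-unit of $A$ lying in one of these linked lattices is automatically a non-unit in both components, and then stratifies the remaining domain by the valuation pair $(v_p(x_0),v_{\pi}(x_1))$, evaluating each stratum through the coset form of Lemma \ref{smallintegrals}(3). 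The unit measures are handled the same way; for example $\mu(\Lambda_p^{\times})=\frac{1}{(p-1)p^{3}}$, so $\mu(\Aut\Lambda_p)^{-1}=(p-1)p^{3}=p^4-p^3$, and this accounts for the leading pair of terms $p^{4-8s}-p^{3-7s}$. I expect the bookkeeping of these linked strata, rather than any single difficult idea, to be the crux of the argument.

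Finally I would sum the eight genus zeta functions, factor out $\zeta_{\Z_p}(s)^2=(1-p^{-s})^{-2}$, and collect powers of $p^{-s}$. As with the expansion of $(1-p^{-s})^2$ in Theorem \ref{rank3}, the $p^{-4s}$ term from $\Lambda_0$ and the overlapping lower-order contributions cancel in pairs; the surviving polynomial should be exactly $p^{4-8s}-p^{3-7s}+p^{3-6s}+p^{3-5s}-p^{2-5s}+p^{2-3s}-p^{1-3s}+p^{1-2s}-p^{-s}+1$, which completes the calculation.
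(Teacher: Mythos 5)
Your proposal reproduces the paper's proof in all essentials: the same specialization of Lemma \ref{lattice-lemma} to $m=1$ giving the eight genus representatives $M(0,0,0)=\Lambda_0$, $M(0,1,0)$, $M(0,0,1)$, $M(1,0,1)$, $M(0,1,1)$, $M(0,0,2)$, $M(0,1,2)$, $M(0,1,3)=\Z_p\B$ (your collapsing of the types $(i,j)$ and the splitting of $(0,1)$ into $r=0$ and $r\in\Z_p^{\times}$ is exactly what part (3) of that lemma yields), the same index factor $p^{(4-i-j)s}$, and the same evaluation of each genus zeta function by computing $\{M:M\}$ and $\{M:\Lambda_p\}$ and then stratifying $A^{\times}\cap\{M:\Lambda_p\}$ into cosets of higher unit groups handled by Lemma \ref{smallintegrals}. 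The digit-by-digit stratification you describe is what Tables \ref{table01}--\ref{tablem013} of the paper carry out, and the cancellation pattern you predict in the final sum is what actually happens.

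That said, two side claims in your outline are wrong and would have to be fixed in execution. First, every $M(r,i,j)$ contains $e_0+e_1=1$ by construction, so its multiplier ring satisfies $\{M:M\}\subseteq M$; the associated order of the $r\in\Z_p^{\times}$ class is therefore strictly \emph{smaller} than $M(1,0,1)$, not larger---the paper finds $\{M(1,0,1):M(1,0,1)\}=M(0,1,1)$. Moreover, $M(1,0,1)$ is not the only non-order among the eight: $M(0,0,2)$ also fails to be closed under multiplication (e.g.\ $(\pi e_1)^2=pv\,e_1\notin M(0,0,2)$), with $\{M(0,0,2):M(0,0,2)\}=M(0,1,2)$, so $\mu(\Aut M)$ must be computed through the multiplier ring for two of the lattices, not one. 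Second, your heuristic that $\mu(\Aut\Lambda_p)^{-1}=p^{3}(p-1)$ accounts for the leading pair $p^{4-8s}-p^{3-7s}$ does not hold up: that factor applied to the deepest stratum $p^{-8s}\zeta_{\Z_p}(s)^2$ gives $(p^{4}-p^{3})p^{-8s}$, and in the actual sum $p^{4-8s}$ comes from the genus of $\Lambda_p$ alone, while $-p^{3-7s}$ is the net of $-2p^{3-7s}$ from that genus and $+p^{3-7s}$ from the genus of $M(0,1,2)$. A smaller imprecision: the colon lattice of $M(0,1,0)$ has $K$-component $p\pi\Z_p+p^{3}\Z_p$, which is a $\Z_p$-lattice but not an $R$-ideal, so even in this ``product'' case Lemma \ref{smallintegrals}(2) cannot be applied in one step and the coset stratification is still required. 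None of this derails the strategy---your stated recipe $\mu(\Aut M)=\mu(\{M:M\}^{\times})$ plus coset stratification is exactly the paper's---but as written the proposal misidentifies which lattices are orders and misattributes the source of the leading terms.
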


\begin{proof}  Let $\Lambda=\Z_p\B$. The maximal order of $\Q_p\B$ is $\Lambda_0 \cong R\oplus \Z_p$, where $R=\Z_p[\pi]$ with residue class field $R/\pi R \cong \mathbb{F}_p$. We denote the $i$-th higher unit group in $R$ by $U_R^{(i)}$ and the $j$-th higher unit group in $\Z_p$ by $U_{\Z_p}^{(j)}$.

Since we are in the $m=1$ case of Lemma \ref{lattice-lemma}, a complete list of representatives of the isomorphism classes of $\Lambda$-lattices $M(r,i,j)$ that lie strictly between $\Lambda=\Z_p\B=M(0,1,3)$ and $\Lambda_0=M(0,0,0)$ is: $M(0,1,2)$, $M(0,1,1)$, $M(0,0,2)$, $M(0,1,0)$, $M(0,0,1)$, and $M(1,0,1)$.  The indices of $\Lambda$ in these $\Lambda$-lattices are: $1$ for $\Lambda$, $p$ for $M(0,1,2)$, $p^2$ for $M(0,1,1)$ and $M(0,0,2)$,  $p^3$ for $M(0,1,0)$, $M(0,0,1)$, and $M(1,0,1)$, and $p^4$ for $\Lambda_0$.  

To compute $\mu(\Aut M )^{-1}$ for each of the lattices above, we note all of them except $M(0,0,2)$ and $M(1,0,1)$ are orders. In these two cases, we calculate that $\{ M(0,0,2): M(0,0,2)\} = M(0,1,2)$ and $\{M(1,0,1) : M(1,0,1) \} = M(0,1,1)$. We can compute the indices of the groups of units of their automorphism groups in $\Lambda_0^{\times}$ by tracking the unit groups of their simple components $Me_0$ and $Me_1$ and congruence conditions that exist between these components. Doing so gives the following indices of the automorphism groups: 
$p^3(p-1)$ for $\Lambda$, $p^2(p-1)$ for $M(0,1,2)$ and $M(0,0,2)$, $p(p-1)$ for $M(0,1,1)$ and $M(1,0,1)$, $p$ for $M(0,1,0)$, $(p-1)$ for $M(0,0,1)$, and $1$ for $\Lambda_0$. 

For each $M(r,i,j)$ found above, we compute the lattices  $\{ M(r,i,j): \Lambda\}$ in Table \ref{table01}, 

\begin{table}[h]
\caption{Complementary lattices}
\label{table01}
\begin{tabular}{c|c}
    $M(r,i,j)$ & $\{M(r,i,j):\Lambda\}$   \\
     \hline
    $M(0,0,0)$  &   $\langle p^2 \pi e_1, p^3e_1, p^3e_0\rangle$ \\
    $M(0,1,0)$  &   $\langle p\pi e_1,p^3 e_1, p^3e_0\rangle$ \\
    $M(0,0,1)$  &   $\langle p^2 \pi e_1, p^2(e_1+e_0), p^3 e_0 \rangle$ \\
    $M(1,0,1)$  &   $\langle p\pi e_1+p^2 v e_1+p^2v e_0, p^3e_1, p^3e_0\rangle$ \\
    $M(0,1,1)$  &   $\langle p\pi e_1, p^2(e_1+e_0), p^3 e_0\rangle$ \\
    $M(0,0,2)$  &   $\langle p^2 \pi e_1, p(e_1+e_0), p^3 e_0\rangle$ \\
    $M(0,1,2)$  &   $\langle p\pi e_1, p(e_1+e_0), p^3 e_0\rangle$ \\
    $M(0,1,3)$  &   $\langle p\pi e_1, e_1+e_0, p^3 e_0\rangle = \Z_p\B$ \\
\end{tabular}
\end{table}

Next, we calculate the genus zeta function for each $\Lambda$-lattice in the table. In the first case  $Z(M(0,0,0), \Lambda; s)$, we notice that $\langle p^2 \pi e_1, p^3 e_1, p^3 e_0\rangle \cong \pi^5 R\oplus p^3 \Z_p$, so by Lemma \ref{smallintegrals}, we have
\begin{equation}
\label{rank3m000}
\int_{A^\times \cap (\pi^5 R \oplus p^3 \Z_p)} \left\Vert x \right\Vert_A^s d^\times x =  \int_{ \pi^5 R -\{0\}} \left\Vert a \right\Vert_{\Q_p[\pi]}^s d^\times a  \int_{p^3\Z_p -\{0\}} \left\Vert b \right\Vert_{\Q_p}^s d^\times b  = p^{-8s}\zeta^2,    
\end{equation}
where  $\zeta=(1-p^{-s})^{-1}$.
Therefore, 
\begin{equation}
\label{rank3z000}
Z(M(0,0,0), \Lambda; s) = p^{-4s}\zeta^2.
\end{equation}

For the remaining lattices, we examine a general element, and record in tables the possible disjoint subsets such an element could belong to, and the value of the integral corresponding to each subset as it follows from Lemma \ref{smallintegrals}. To illustrate this, consider $M(0,1,0)$. An element of $\{M(0,1,0): \Lambda\}= \langle p\pi e_1,p^3 e_1, p^3e_0\rangle \subseteq R \oplus \Z_p$ is of the form $(\pi^3(a_0+\pi^2R), p^3\Z_p)$, where $a_0 \in \mathbb{F}_p$. We have two choices: either $a_0=0$ or $a \in \mathbb{F}_p^\times$. When $a_0=0$, we get the subset $\pi^5 R  \oplus p^3 \Z_p$, whose integral we calculated in Equation \ref{rank3m000}. When $a_0=u \in \mathbb{F}_p^\times$, we obtain $(\pi^3(u+\pi^2R), p^3\Z_p) = u \pi^3 U_R^{(2)} \oplus p^3 \Z_p$, which is a multiplicative translate of $\pi^3 U_R^{(2)}\oplus p^3 \Z_p$ by $(u, 1) \in \Lambda_0^\times$, and therefore by Lemma \ref{smallintegrals}, gives the integral $\frac{p^{-6s}}{p(p-1)}\zeta$. Since there are $(p-1)$ choices for $u \in \mathbb{F}_p^\times$, the value of the integral over the disjoint union $\displaystyle \dot\bigcup_{u \in \mathbb{F}_p^\times}  u \pi^3 U_R^{(2)} \oplus p^3 \Z_p$ is $\frac{p^{-6s}}{p}\zeta$. This data is summarized in Table \ref{tablem010}.

\begin{table}[h]
\caption{Integrals for $M(0,1,0)$}
\label{tablem010}
\begin{tabular}{ |c|c|c| } 
 \hline
 $a_0$ & Subset & Value of the integral   \\ 
 \hline
 $0$ & $A^\times \cap (\pi^5 R  \oplus p^3 \Z_p)$ & Equation \ref{rank3m000} \\ 
 $u \in \mathbb{F}_p^\times$ & $(p-1)$ multiplicative translates of $\pi^3 U_R^{(2)} \oplus p^3\Z_p$ & $\frac{p^{-6s}}{p}\zeta$\\ 
 \hline
\end{tabular}
\end{table}

Adding the two integrals, we get 

\begin{align}
\label{rank3m010}
\int_{A^\times \cap \langle p\pi e_1,p^3 e_1, p^3e_0\rangle } \left\Vert x \right\Vert_A^s d^\times x 
& = p^{-8s}\zeta^2+ \frac{p^{-6s}}{p}\zeta = \frac{1}{p}(p^{1-8s} -p^{-7s}+p^{-6s})\zeta^2,  
\end{align}
and 
\begin{equation}
\label{rank3z010}
Z(M(0,1,0), \Lambda; s) = p\cdot p^{3s} \frac{1}{p}(p^{1-8s} -p^{-7s}+p^{-6s})\zeta^2= (p^{1-5s} -p^{-4s}+p^{-3s})\zeta^2 .
\end{equation}

For $M(0,0,1)$, an element of $\langle p^2 \pi e_1, p^2(e_1+e_0), p^3 e_0 \rangle $ belongs to $(\pi^4(a_0v^{-2}+\pi R), p^2(a_0+p\Z_p))$, with $a_0 \in \mathbb{F}_p$. The disjoint subsets and corresponding integrals are recorded in Table \ref{tablem001}.

\begin{table}[h]
\label{tablem001}
\caption{Integrals for $M(0,0,1)$}
\begin{tabular}{ |c|c|c| } 
 \hline
 $a_0$ & Subset & Value of the integral   \\ 
 \hline
 $0$ & $A^\times \cap (\pi^5 R  \oplus p^3 \Z_p)$ & Equation \ref{rank3m000} \\ 
 $u \in \mathbb{F}_p^\times$ & $(p-1)$ multiplicative translates of $\pi^4 U_R^{(1)} \oplus p^2 U_{\Z_p}^{(1)}$ & $\frac{p^{-6s}}{p-1}$\\ 
 \hline
\end{tabular}
\end{table}

Adding the integrals, we obtain

\begin{align}
\label{rank3m001}
\int_{A^\times \cap \langle p^2 \pi e_1, p^2(e_1+e_0), p^3 e_0 \rangle} \left\Vert x \right\Vert_A^s d^\times x & = p^{-8s}\zeta^2 + \frac{p^{-6s}}{p-1}= \frac{1}{p-1}(p^{1-8s}-2p^{-7s}+p^{-6s})\zeta^2.    
\end{align}
and 
\begin{equation}
\label{rank3z001}
Z(M(0,0,1), \Lambda; s) = p^{3s}(p-1) \frac{1}{p-1}(p^{1-8s}-2p^{-7s}+p^{-6s})\zeta^2 = (p^{1-5s}-2p^{-4s}+p^{-3s})\zeta^2.
\end{equation}

In the case of $M(1,0,1)$, an element of $\langle p\pi e_1+p^2ve_1+p^2ve_0, p^3e_1, p^3e_0\rangle$ belongs to $(\pi^3(a_0v^{-1}+a_0v^{-1}\pi + \pi^2R), p^2(a_0v+p\Z_p))$, $a_0 \in \mathbb{F}_p$. This case is recorded in Table \ref{tablem101}.

\begin{table}[h]
\caption{Integrals for $M(1,0,1)$}
\label{tablem101}
\begin{tabular}{ |c|c|c| } 
 \hline
 $a_0$ & Subset & Value of the integral  \\ 
 \hline
 $0$ & $A^\times \cap (\pi^5 R  \oplus p^3 \Z_p)$ & Equation \ref{rank3m000} \\ 
 $u \in \mathbb{F}_p^\times$ & $(p-1)$ multiplicative translates of $\pi^3 U_R^{(2)} \oplus p^2 U_{\Z_p}^{(1)}$ & $\frac{p^{-5s}}{p(p-1)}$\\ 
 \hline
\end{tabular}
\end{table}

This gives 
\begin{align*}
\int_{A^\times \cap \langle p\pi e_1+p^2ve_1+p^2ve_0, p^3e_1, p^3e_0\rangle} \left\Vert x \right\Vert_A^s d^\times x 
&= \frac{1}{p(p-1)}(p^{2-8s}-p^{1-8s}+p^{-7s}-2p^{-6s}+p^{-5s})\zeta^2,     
\end{align*}
and 
\begin{equation}
\label{rank3z101}
Z(M(1,0,1), \Lambda; s) = (p^{2-5s}-p^{1-5s}+p^{-4s}-2p^{-3s}+p^{-2s})\zeta^2.
\end{equation}.

For $M(0,1,1)$,   an element of $\langle p\pi e_1, p^2(e_1+e_0), p^3e_0\rangle$ belongs to $(\pi^3(a_0v^{-1}+a_1v^{-2}\pi+ \pi^2R), p^2(a_1+p\Z_p))$, $a_0, a_1  \in \mathbb{F}_p$. The choices for $a_0, a_1$ are recorded in Table \ref{tablem011}.

\begin{table}[h]
\caption{Integrals for $M(0,1,1)$}
\label{tablem011}
\begin{tabular}{ |c|c|c| } 
 \hline
 $(a_0, a_1)$ & Subset & Value of the integral  \\ 
 \hline
 $(0,0)$ or $(u_1, 0)$ & $A^\times \cap \langle p\pi e_1, p^3e_1, p^3 e_0\rangle$ & Equation \ref{rank3m010} \\ 
  $(0,u_2)$ & $(p-1)$ translates of  $\pi^4 U_R^{(1)} \oplus p^2 U_{\Z_p}^{(1)} $ & $\frac{p^{-6s}}{p-1}$ \\ 
   $(u_1,u_2) \in (\mathbb{F}_p^\times)^2$ & $(p-1)^2$ translates  of  $\pi^3 U_R^{(2)} \oplus p^2 U_{\Z_p}^{(1)}$ & $\frac{p^{-5s}}{p}$\\ 
 \hline
\end{tabular}
\end{table}
Adding up the integrals, we obtain
\begin{align}
\label{rank3m011}
\int_{A^\times \cap \langle p\pi e_1, p^2(e_1+e_0), p^3e_0\rangle} \left\Vert x \right\Vert_A^s d^\times x 
&= \frac{1}{p(p-1)}(p^{2-8s}-2p^{1-7s}+p^{-6s}+p^{1-5s}-p^{-5s})\zeta^2,     
\end{align}
and 
\begin{equation}
\label{rank3z011}
Z(M(0,1,1), \Lambda; s) = (p^{2-6s}-2p^{1-5s}+p^{-4s}+p^{1-3s}-p^{-3s})\zeta^2.
\end{equation}.

\medskip
For $M(0,0,2)$,   an element of $\langle p^2\pi e_1, p(e_1+e_0), p^3e_0\rangle$ belongs to $(\pi^2(a_0v^{-1}+a_1v^{-2}\pi^2+ \pi^3R), p(a_0+a_1p+p^2\Z_p))$, $a_0, a_1  \in \mathbb{F}_p$. We have the following choices, outlined in Table \ref{tablem002}.

\begin{table}[h]
\caption{Integrals for $M(0,0,2)$}
\label{tablem002}
\begin{tabular}{ |c|c|c| } 
 \hline
 $(a_0, a_1)$ & Subset & Value of the integral \\ 
 \hline
 $(0,0)$ or $(0, u_2)$ & $ A^\times \cap \langle p^2\pi e_1, p^2(e_1+e_0), p^3 e_0\rangle$ & Equation \ref{rank3m001}\\ 
  $(u_1,0)$ or $(u_1, u_2)$ & $p(p-1)$ translates of $\pi^2 U_R^{(3)}\oplus p U_{\Z_p}^{(2)}$ & $\frac{p^{-3s}}{p^2(p-1)}$ \\ 
 \hline
\end{tabular}
\end{table}
Adding them up, we obtain
\begin{align*}
\int_{A^\times \cap \langle p^2\pi e_1, p(e_1+e_0), p^3e_0\rangle} \left\Vert x \right\Vert_A^s d^\times x 
&= \frac{1}{p^2(p-1)}(p^{3-8s}  - 2 p^{2-7s}+ p^{2-6s}  + p^{-5s} - 2p^{-4s}+ p^{-3s})\zeta^2,     
\end{align*}
and 
\begin{equation}
\label{rank3z002}
Z(M(0,0,2), \Lambda; s) = (p^{3-6s}  - 2 p^{2-5s}+ p^{2-4s}  + p^{-3s} - 2p^{-2s}+ p^{-s})\zeta^2.
\end{equation}.

\medskip
For $M(0,1,2)$,   an element of $\langle p\pi e_1, p(e_1+e_0), p^3e_0\rangle$ belongs to $(\pi^2(a_0v^{-1}+a_1 v^{-1}\pi+a_2v^{-2}\pi^2+ \pi^3R), p(a_0+a_2p+p^2\Z_p))$, with $a_0, a_1, a_2  \in \mathbb{F}_p$. However, based on previously computed integrals, we only need to consider the choice of $a_0$, as described in Table \ref{tablem012}.

\begin{table}[h]
\caption{Integrals for $M(0,1,2)$}
\label{tablem012}
\begin{tabular}{ |c|c|c| } 
 \hline
 $a_0$ & Subset & Value of the integral  \\ 
 \hline
 $0$ & $A^\times \cap \langle p\pi e_1, p^2(e_1+e_0), p^3 e_0\rangle$ & Equation \ref{rank3m011}\\ 
 $u \in \mathbb{F}_p^\times$ & $p^2(p-1)$ multiplicative translates of $\pi^2 U_R^{(3)} \oplus p U_{\Z_p}^{(2)}$ & $\frac{p^{-3s}}{p(p-1)}$\\ 
 \hline
\end{tabular}
\end{table}
Adding them up, we obtain
\begin{align}
\label{rank3m012}
\int_{A^\times \cap \langle p\pi e_1, p(e_1+e_0), p^3e_0\rangle} \left\Vert x \right\Vert_A^s d^\times x 
&= \frac{1}{p(p-1)}(p^{2-8s}- 2p^{1-7s} + p^{-6s} + p^{1-5s} - 2p^{-4s} + p^{-3s})\zeta^2,     
\end{align}
and 
\begin{equation}
\label{rank3z012}
Z(M(0,1,2), \Lambda; s) = (p^{3-7s}  - 2p^{2-6s} + p^{1-5s}+ p^{2-4s}- 2p^{1-3s} + p^{1-2s})\zeta^2.
\end{equation}.

Finally, for  $M(0,1,3)$,   an element of $\Lambda = \langle p\pi e_1, e_1+e_0, p^3e_0\rangle$ belongs to $(a_0+a_1v^{-1}\pi^2+a_2v^{-1}\pi^3+ a_3v^{-2}\pi^4+\pi^5R), a_0+a_1p+a_3p^2+p^3\Z_p))$, with $a_0, a_1, a_2,a_3  \in \mathbb{F}_p$. Again, we need only consider the choice of $a_0$, recorded in Table \ref{tablem013}.

\begin{table}[h]
\caption{Integrals for $M(0,1,3)$}
\label{tablem013}
\begin{tabular}{ |c|c|c| } 
 \hline
 $a_0$ & Subset & Value of the integral  \\ 
 \hline
 $0$ & $A^\times \cap \langle p^2\pi e_1, p(e_1+e_0), p^3 e_0\rangle$ & Equation \ref{rank3m012}\\ 
 $u \in \mathbb{F}_p^\times$ & $\Lambda^\times$ & $\frac{1}{p^3(p-1)}$\\ 
 \hline
\end{tabular}
\end{table}
Adding up these integrals, we get 
\begin{align*}
\int_{A^\times \cap \Lambda} \left\Vert x \right\Vert_A^s d^\times x 
&= \frac{1}{p^3(p-1)}(p^{4-8s} - 2 p^{3-7s}  + p^{2-6s} + p^{3-5s}- 2p^{2-4s}  + p^{2-3s}  + p^{-2s} - 2p^{-s} + 1)\zeta^2 ,     
\end{align*}
and 
\begin{equation}
\label{rank3z013}
Z(M(0,1,3), \Lambda; s) = (p^{4-8s} - 2 p^{3-7s}  + p^{2-6s} + p^{3-5s}- 2p^{2-4s}  + p^{2-3s}  + p^{-2s} - 2p^{-s} + 1)\zeta^2.
\end{equation}.

Finally, adding Equations \ref{rank3z000}, \ref{rank3z010}, \ref{rank3z001}, \ref{rank3z101}, \ref{rank3z011}, \ref{rank3z002}, \ref{rank3z012}, and \ref{rank3z013}, we obtain      

\begin{equation}
\zeta(\Lambda; s) = (p^{4-8s} - p^{3-7s} + p^{3-6s}  + p^{3-5s} - p^{2-5s}  + p^{2-3s} -p^{1-3s}  + p^{1-2s}  - p^{-s} + 1)\zeta^2.
\end{equation}
\end{proof}

The two cases left to examine are for $v_p(n)$ even. If $v$ is a square modulo $p$, then the algebra $\Q_p \B \cong \Q_p \oplus \Q_p \oplus \Q_p$ splits, and the zeta function of $\Z_p \B$ can be examined using the techniques of \cite{BH2022}. 

When $v$ is not a square modulo $p$, $\Q_p \B \cong \Q_p \oplus K$, where $K$ is an unramified quadratic extension of $\Q_p$. If we let $e_0$ and $e_1$ be the primitive idempotents of $\Q_p\B$, then the maximal order of $\Q_p\B$ is $\Lambda_0=\langle \sqrt{v}e_1, e_1, e_0\rangle$, and we can represent the Hermite normal form of  $\Z_p\B$ with respect to this basis by $\left(\begin{smallmatrix} p^m & 0 & 0 \\ 0 & 1 & 1 \\ 0 & 0 & p^{2m}\end{smallmatrix}\right)$. Therefore, the calculation of the local $p$-zeta function of $\Z_p\B$ can be obtained using the same ideas as Theorem \ref{rank3}, Lemma \ref{lattice-lemma} and Theorem \ref{rank3highpower}.

\section{Zeta functions of Small Fusion Rings}

The most widely studied examples of fusion rings are those that arise as the Grothendieck rings of monoidal tensor categories; these are called {\it categorifiable} fusion rings (see \cite{EGNO}).   In these representation categories there are finitely many irreducible representations that play the role of basis elements.  The direct sum of representations is naturally a representation of the algebra, and the presence of a compatible co-algebra structure implies that the tensor product of representations also gives a representation.  So the nonnegative integer span of the irreducibles is closed under the tensor product, and using similarity of representations as an equality, the integer span of the irreducibles produces a fusion ring with the irreducible representations as the defining basis.   We will consider the zeta functions for the following categorifiable fusion rings of ranks $2$ and $3$: 

(Rank 2:) 

the Fibonacci fusion ring: $\B = \{1,b\}$, $b^2 = 1 + b$; and 

the group $C_2$; 

(Rank 3)
 
(a) the Ising fusion ring: $\B = \{1,b,d\}, b^2=1, bd=d, d^2=1+b$; 

(b) $Rep(S_3)$: $\B=\{1,b,d\}, b^2=1, bd=d, d^2=1+b+d$; 

(c) $PSU(5)$ level $2$: $\B=\{1,b,d\}, b^2=1+d, bd=b+d, d^2=1+b+d$;   

(d) the $E_6$-subfactor fusion ring: $\B=\{1,b,d\}, b^2=1, bd=d, d^2=1+b+2d$; and 

(e) the cyclic group $C_3$.

\smallskip
The Fibonacci fusion ring is the only categorifiable one of dimension $2$ that is not a group.  The five types of $3$-dimensional fusion categories corresponding to pivotal fusion categories were recently classified by Ostrik in \cite{Ostrik2015} - the pivotal assumption on these is believed to be unnecessary.  For dimension $4$ and higher complete classification remains open, but a considerable amount of recent work on special cases has appeared - see \cite{Larson2013}, \cite{DongChenWang2022}, \cite{DongZhangDai2018}, \cite{Bruillard2016}, and \cite{BruillardOrtiz2022}.  A list of small multiplicity-free (i.e.~commutative) fusion rings is under development at 
{\footnotesize{\tt https://anyonwiki.github.io/pages/Lists/losmffr.html}} \cite{VS}.

\begin{exa} {\rm 
{\bf The Fibonacci fusion ring.} The integral basis of the Fibonacci fusion ring is  $\B = \{1,b\}$, $b^2 = 1 + b$.  The largest eigenvalue of $b$ is $\delta(b)=\frac{1+\sqrt{5}}{2}$, the golden ratio.  Since $\Z \B = \Z[b] \simeq \Z [\frac{1+\sqrt{5}}{2}]$, and this is the ring of integers of $\Q \B \simeq \Q (\sqrt{5})$, we have that $\zeta_{\Z \B}(s)$ is equal to the Dedekind zeta function for the ring $\Z [\frac{1+\sqrt{5}}{2}]$.  
}\end{exa}

\medskip 
\begin{exa}  {\rm {\bf 3a. The Ising fusion ring.} For the Ising fusion ring, the standard basis will be $\B^s = \{1, b, \sqrt{2}d \}$.  The standard integral table algebra $\Q \B^s$ agrees with the adjacency algebra of the association scheme of order $4$ corresponding to the square.  The character table of this standard integral table algebra (with columns corresponding to elements of $\B^s$, rows to the irreducible characters whose multiplicities appear on the right) is 
$$ P = \begin{bmatrix} 1 & 1 & 2 \\ 1 & 1 & -2 \\ 1 & -1 & 0 \end{bmatrix} \begin{array}{l} 1 \\ 1 \\ 2 \end{array}. $$ 
Since the basis element $d \in \B$ has a quadratic irrational Perron-Frobenius eigenvalue $\sqrt{2}$, the degree character $\delta$ of $\B$ has a Galois conjugate $\delta^{\tau}$, and the third irreducible character $\phi$ will be rational-valued.  So we have that the primitive idempotents of $\C \B$ are 
$$ e_{\delta} = \frac{1}{4}(1+b+\sqrt{2}d), e_{\delta^{\tau}} = \frac{1}{4}(1+b-\sqrt{2}d), \mbox{ and } e_{\phi} = \frac{1}{2}(1-b).$$ 
The primitive idempotents of $\Q \B$ are $e_0=e_{\phi}$ and $e_1 = e_{\delta}+e_{\delta^{\tau}} = \frac{1}{2}(1 + b)$.  Note that $de_0=0$.  We have $\Q \B \simeq \Q \oplus \Q (\sqrt{2})$, and the maximal order is $\Lambda_0 = \langle de_1 = d,e_1,  e_0 \rangle$.  When we write $\B$ in terms of this basis, we get $1= e_0+e_1$, $b=e_1-e_0$, and $d = de_1$, which gives $\Z \B = \langle  de_1, e_1+e_0, 2e_0 \rangle$.  So $[\Lambda_0 : \Z \B]=2$ and we only need to calculate the local zeta function at $p=2$. 

 When we complete at $p=2$, the maximal order is $\Lambda_{0,2} \simeq  \Z_2 \oplus \Z_2[\sqrt{2}]$ and $\Z_2 \B = \{ x d e_1+ ye_1+ (y+2z)e_0 : x,y,z \in \Z_2 \}.$ Therefore, we can represent $\Z_2 \B$ as 
 $$\Z_2 \B \cong \{ (a, b)\in  \Z_2 \oplus \Z_2[\sqrt{2}]: b \pmod{2} \equiv a \pmod{\sqrt{2}} \}.$$
 Since  $\sqrt{2}$ is a uniformizer in the local ring $\Z_2[\sqrt{2}]$ with residue class field $\mathbb{F}_2$, the same local calculations as in \cite[Section 3.4]{Bushnell-Reiner1980} and Theorem \ref{rank3} apply, and 
 $$\zeta_{\Z_2 \B}(s) = (2^{1-2s}-2^{-s}+1) \zeta_{\Z_2}(s)^2.$$

Therefore, 
$$ \zeta_{\Z \B}(s) = (2^{1-2s}-2^{-s}+1) \zeta_{\Z}(s) \zeta_{\Z[\sqrt{2}]}(s). $$

\medskip
{\bf 3b. $Rep(S_3)$:} This time the standard basis will be $\B^s = \{1,b,2d\}$.  This the standard basis of an integral table algebra of order $6$ whose character table is 
$$ P = \begin{bmatrix} 1 & 1 & 4 \\ 1 & 1 & -2 \\ 1 & -1 & 0 \end{bmatrix} \begin{array}{l} 1 \\ 2 \\ 3 \end{array}. $$ 
This is the character table of the imprimitive association scheme corresponding to $K_2^{(3)}$ and its multipartite complement $K_{3 \times 2}$; it is dual to the conjugacy class scheme of $S_3$.   The primitive idempotents of $\Q \B$ are: 
$$ e_{\delta} = \frac{1}{6}(1+b+2d), e_{\psi} = \frac{1}{3}(1 + b - d), \mbox{ and } e_{\phi}=\frac{1}{2}(1-b), $$ 
so $b = e_{\delta}+e_{\psi}-e_{\phi}$ and $d = 2e_{\delta}-e_{\psi}$.  Simplifying the basis $\{1, b, d\}$, it follows that $\Z \B = \Z \cdot 1 + 2\Z (e_{\delta}+e_{\psi}) + 3\Z e_{\psi}$, so it has nontrivial completions at the primes $p=2$ and $p=3$.  The $2$- and $3$- local zeta functions of this order match those of of $\Z_p K_p \oplus \Z_p$, $p=2,3$, where $K_p$ is the complete graph association scheme on $p$ vertices.  The zeta function of $\Z_p K_p$ has been worked out for all primes $p$ in \cite{Hanaki-Hirasaka2016} and \cite{BH2022}.   From the formula in \cite{BH2022} we get 
$$ \zeta_{\Z \B}(s) = (2^{1-2s} - 2^{-s} + 1)(3^{1-2s} - 3^{-s} + 1) \zeta_{\Z}(s)^3. $$

\medskip
{\bf 3c. $PSU(5)$ level $2$:} Note that $b = d^2-d-1$.  This implies $\Z \B = \mathbb{Z}[d]$, and $d$ has minimal polynomial $x^3-2x^2-x+1$.  The roots of this polynomial are the Galois conjugates  of $\alpha = -\zeta_7-\zeta_7^2-\zeta_7^5-\zeta_7^6$.  As this element of the ring of integers of the totally real cubic subfield $\mathbb{Q}(\zeta_7)^+$  of  $\mathbb{Q}(\zeta_7)$ has norm $1$, $\Z[\alpha]$ is equal to the ring of integers of $\mathbb{Q}(\zeta_7)^+$.  Therefore, $\zeta_{\Z \B}(s)$ will be equal to the Dedekind zeta function of $\Z[\alpha]$. 

\medskip 
{\bf 3d. The rank $3$ $E_6$ subfactor fusion ring:} This time $b=d^2-2d-1$, so $\Z \B = \Z[d]$.  Since $\delta(b)=1$, we have $\delta(d)^2=2+2\delta(d)$, which means $\delta(d) = 1 + \sqrt{3}$.  
The minimal polynomial of $d$ is $x^3-2x^2-2x$, so $\Q \B \simeq \Q \oplus  \Q[1+\sqrt{3}]$.  The primitive idempotents of $\Q \B$ are $e_0 = \frac12 (1-b)$ and $e_1=\frac12 (1 + b)$, and the maximal order is $\Lambda = \langle de_1=d, e_1, e_0\rangle$.  Since $1 = e_0+e_1$, $b = e_1 - e_0$, and $d = de_0$, we get $\Z\B = \langle de_1, e_1+e_0, 2e_0 \rangle$.  So we only need the local zeta function at $p=2$.   
The completion at $p=2$ of the maximal order of $\Q_2 \B$ is $\Lambda_0 = \Z_2 e_0 \oplus \Z_2\B e_1 \simeq  \Z_2 \oplus \Z_2[1+\sqrt{3}]$, and $\pi = 1+\sqrt{3}$ is a uniformizer of $\Z_2[1+\sqrt{3}]$ of norm $-2$. Then $\Z_2 \B$ corresponds to $\{(\alpha, x+y\pi) \in \Lambda_0 :  a \mod 2 \equiv x+y\pi \mod \pi  \}$, and this condition is equivalent to $x \equiv \alpha \mod 2$, $y \in \Z_2$. 
Again we have just two genera of $\Z_2 \B$-lattices, which are represented by $\Lambda_0$ and $\Z_2 \B$.  The calculation of the local zeta function in this case matches that of the Ising fusion ring in case 3a, so in the end we will get: 
$$ \zeta_{\Z \B}(s) = (2^{1-2s}-2^{-s}+1) \zeta_{\Z}(s) \zeta_{\Z[\sqrt{3}]}(s).$$ }

\medskip
{\bf 3e. The cyclic group $C_3$:} Solomon computed $\zeta_{\Z C_p}(s)$ for $p$ prime in \cite{Solomon77}, the result matches that of the asymmetric rank $3$ table algebra discussed in the previous section, in the case where the order is $3$: 
$$\zeta_{\Z C_3}(s) = (3^{1-2s}-3^{-s}+1)\zeta_{\Z}(s)\zeta_{\Z[\sqrt{-3}]}(s).$$  
\end{exa}

\end{document}